\newtheorem{lemma}{Lemma}
\newtheorem{definition}{Definition}
\newtheorem{theorem}{Theorem}
\newtheorem{example}{Example}
\newtheorem{remark}{Remark}
\newtheorem{corollary}{Corollary}
\newtheorem*{oprobs}{Open Problems}
\newcommand{\vek}[1]{\mathbf{#1}}
\newcommand{\mat}[1]{\mathbf{#1}}
\newcommand{\gauss}[3]{\genfrac{[}{]}{0pt}{}{#1}{#2}_{#3}}
\newcommand{\gaussm}[2]{\genfrac{[}{]}{0pt}{}{#1}{#2}}
\newcommand{\subspaces}{\mathrm{L}}
\newcommand{\smax}{\mathrm{A}}
\DeclareMathOperator{\PG}{PG}
\DeclareMathOperator{\AG}{AG}
\DeclareMathOperator{\Mat}{M}
\DeclareMathOperator{\Aut}{Aut}
\DeclareMathOperator{\GL}{GL}
\DeclareMathOperator{\Hom}{Hom}
\DeclareMathOperator{\End}{End}
\DeclareMathOperator{\rank}{rk}
\DeclareMathOperator{\kernel}{Ker}
\newcommand{\LAG}{\AG_{\mathrm{\ell}}}
\newcommand{\imat}{\mathbf{I}}
\newcommand{\uvek}{\mathbf{e}}
\newcommand{\trace}{\mathrm{Tr}}
\newcommand{\opp}{\circ}
\newcommand{\dham}{\mathrm{d}_{\mathrm{Ham}}}
\newcommand{\rdist}{\mathrm{d}_{\mathrm{r}}}
\newcommand{\sdist}{\mathrm{d}_{\mathrm{s}}}
\newcommand{\F}{\mathbb{F}}
\DeclareMathOperator{\cm}{cm}
\newcommand{\cms}{\mathcal{K}}
\newcommand{\tp}{\mathsf{T}}
\newcommand{\attspace}{\mathscr{H}}
\title[Optimal Binary $(6,77,4;3)$ Subspace Codes]{Optimal Binary
  Subspace Codes\\of Length $6$, Constant Dimension $3$\\and Minimum Subspace
  Distance $4$}
\author{Thomas Honold}
\address{Thomas Honold,
  Department of Information and Electronic Engineering,
  Zhejiang University, 
  38 Zheda Road,
  310027 Hangzhou, 
  China}
\email{honold@zju.edu.cn}
\thanks{The work of the first author
was supported by the National Basic Research Program of China 
(973) under Grant No. 2009CB320903. The work of the two latter authors 
was supported by the ICT COST Action IC1104.}
\author{Michael Kiermaier}
\address{Michael Kiermaier,
  Mathematisches Institut,
  Universit\"at Bayreuth,
  D-95440 Bayreuth,
  Germany} 
\email{michael.kiermaier@uni-bayreuth.de} 
\author{Sascha Kurz}
\address{Sascha Kurz,
  Mathematisches Institut,
  Universit\"at Bayreuth,
  D-95440 Bayreuth,
  Germany} 
\email{sascha.kurz@uni-bayreuth.de} 
\begin{document}

\subjclass[2000]{Primary 94B05, 05B25, 51E20; Secondary 51E14, 51E22, 51E23}

\keywords{Subspace code, network coding, partial spread}

\date{}

\dedicatory{In memoriam Axel Kohnert (1962--2013)}

\begin{abstract}
  It is shown that the maximum size of a binary subspace code of
  packet length $v=6$, minimum subspace distance $d=4$, and constant
  dimension $k=3$ is $M=77$; in Finite Geometry terms, the maximum
  number of planes in $\PG(5,2)$ mutually intersecting in at most a
  point is $77$.  Optimal binary $(v,M,d;k)=(6,77,4;3)$ subspace codes
  are classified into $5$ isomorphism types, and a computer-free
  construction of one isomorphism type is provided.  The construction
  uses both geometry and finite fields theory and generalizes to any
  $q$, yielding a new family of $q$-ary $(6,q^6+2q^2+2q+1,4;3)$ subspace
  codes.
\end{abstract}

\maketitle

\section{Introduction}\label{sec:intro}
Let $q>1$ be a prime power, $v\geq 1$ an integer, and $V\cong\F_q^v$ a
$v$-dimensional vector space over $\F_q$. The set
$\subspaces(V)$ of all subspaces of $V$, or flats of the
projective geometry $\PG(V)\cong\PG(v-1,q)$, forms a metric space
with respect to the \emph{subspace distance} defined by $\sdist(U,U')
=\dim(U+U')-\dim(U\cap U')$. The so-called \emph{Main Problem of Subspace
  Coding} asks for the determination of the maximum sizes of codes in
the spaces $(\subspaces(V),\sdist)$ (so-called \emph{subspace codes})
with given minimum distance and the classification of the
corresponding optimal codes. Since $V\cong\F_q^v$ induces an isometry
$\left(\subspaces(V),\sdist\right)\cong(\subspaces(\F_q^v),\sdist)$,
the particular choice of the ambient vector space $V$ does not matter
here.

The metric space $(\subspaces(V),\sdist)$ may be viewed as a $q$-analogue 
of the Hamming space $(\F_2^v,\dham)$ used in conventional coding
theory via
the subset-subspace analogy~\cite{knuth71a}.\footnote{If $\F_2^v$ is
  identified with the set of subsets of $\{1,\dots,v\}$ in the usual
  way, then
  $\dham(\vek{c},\vek{c}')=\#(\vek{c}\cup\vek{c}')-\#(\vek{c}\cap\vek{c}')$
  for $\vek{c},\vek{c}'\in\F_2^v$.} The corresponding main problem
of conventional coding theory has been around for several decades
and is well-studied by now; cf.\ the
extensive treatise \cite{pless-huffman98a,pless-huffman98b}, for
example. 
Whereas the classical main problem resulted from Shannon's description of
point-to-point channel coding,
the main problem of subspace coding has emerged only recently in
connection with the K\"otter-Kschischang model of noncoherent network
coding; cf.\ \cite{koetter-kschischang08} and the survey
\cite{kschischang12}. A recent survey 
on the main problem of subspace coding can be found in
reference \cite{etzion13b}, to which we also refer for more background
on this problem. However, it is only fair to say that its surface has only
been scratched.

Our contribution to the main problem of subspace coding is the
re\-so\-lu\-tion of the smallest hitherto open constant-dimension
case---binary subspace codes of packet length $v=6$ and constant
dimension $3$. This answers a question posed in \cite{etzion13b}
(Research Problem~11, cf.\ also the first table following Research
Problem~10). It also forms the major step towards the solution of
the main problem for the smallest open ``mixed-dimension'' case
$(\subspaces(\F_2^6),\sdist)$, which is left for a future publication.

In order to state our results, we make the
following fundamental
\begin{definition}
  \label{dfn:scode}
  A \emph{$q$-ary $(v,M,d)$ subspace code} (or \emph{subspace code
  with parameters $(v,M,d)_q$}) is a set $\mathcal{C}$ of subspaces of
$V\cong\F_q^v$ with size $\#\mathcal{C}=M\geq 2$ and minimum subspace
distance
\[
    \sdist(\mathcal{C})=\min\bigl\{\sdist(U,U');U,U'\in\mathcal{C},U\neq
U'\bigr\}=d\text{.}
\]
If all subspaces in $\mathcal{C}$ have the same
dimension $k\in\{1,\dots,v-1\}$, then $\mathcal{C}$ is said to be a
\emph{$q$-ary $(v,M,d;k)$ constant-dimension subspace
  code}.\footnote{Here the alternative notation $(v,M,d;k)_q$ 
  also applies.} 
The maximum size of a $q$-ary $(v,M,d)$ subspace code (a $q$-ary
$(v,M,d;k)$ constant-dimension subspace code) is denoted by
$\smax_q(v,d)$ (respectively, $\smax_q(v,d;k)$).
\end{definition}
In what follows, except for the conclusion part, we will restrict
ourselves to constant-dimension codes and the numbers
$\smax_q(v,d;k)$. For $0\leq k\leq v=\dim(V)$ we write
$\gaussm{V}{k}=\bigl\{U\in\subspaces(V);\dim(U)=k\bigr\}$ (the set of
$(k-1)$-flats of $\PG(V)\cong\PG(v-1,q)$). Since
$\sdist(U,U')=2k-2\dim(U\cap U')=2\dim(U+U')-2k$ for
$U,U'\in\gaussm{V}{k}$, the minimum distance of every
constant-dimension code $\mathcal{C}\subseteq\gaussm{V}{k}$ is an even
integer $d=2\delta$, and $\delta$ is characterized by
\[
    k-\delta=\max\left\{\dim(U\cap U');U,U'\in\mathcal{C},U\neq U'\right\}\text{.}
\]

The numbers $\smax_q(v,d;k)$ are defined only for $1\leq k\leq v-1$
and even integers $d=2\delta$ with $1\leq \delta\leq\min\{k,v-k\}$. Moreover,
since sending a subspace $U\in\gaussm{V}{k}$ to its orthogonal space
$U^\perp$ (relative to a fixed non-degenerate symmetric bilinear form
on $V$) induces an isomorphism (isometry) of metric spaces
$\left(\gaussm{V}{k},\sdist\right)\to\left(\gaussm{V}{v-k},\sdist\right)$,
$\mathcal{C}\mapsto\mathcal{C}^\perp=\{U^\perp;U\in\mathcal{C}\}$, we have
$\smax_q(v,d;k)=\smax_q(v,d;v-k)$. Hence it suffices to determine the
numbers $\smax_q(v,d;k)$ for $1\leq k\leq v/2$ and
$d\in\{2,4,\dots,2k\}$.

The main result of our present work is
\begin{theorem}
  \label{thm:main}
  $\smax_2(6,4;3)=77$, and there exist exactly five isomorphism
  classes of optimal binary $(6,77,4;3)$ constant-dimension subspace codes.
\end{theorem}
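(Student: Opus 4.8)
The assertion has three ingredients: the upper bound $\smax_2(6,4;3)\le 77$, a matching construction, and the classification into five isomorphism classes. Fix $V\cong\F_2^6$ and let $\mathcal{C}\subseteq\gauss{6}{3}{2}$ be a putative optimal code of size $M$, so that any two members of $\mathcal{C}$ meet in a subspace of dimension $\le 1$. For a point $P$ (a $1$-dimensional subspace) set $a_P=\#\{U\in\mathcal{C}:P\subseteq U\}$, and for a hyperplane $H$ set $b_H=\#\{U\in\mathcal{C}:U\subseteq H\}$. Two local facts are used repeatedly: (i) a line of $\PG(5,2)$ (a $2$-dimensional subspace) lies in at most one member of $\mathcal{C}$; (ii) the members of $\mathcal{C}$ through a fixed $P$ induce in $V/P\cong\F_2^5$ a partial line spread of $\PG(4,2)$, while the members of $\mathcal{C}$ inside a fixed $H$ form a constant-dimension code of the same minimum distance in $\PG(4,2)$. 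Hence $a_P\le\smax_2(5,4;2)=9$ and $b_H\le\smax_2(5,4;3)=\smax_2(5,4;2)=9$.

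\textbf{Upper bound.} The standard counting identities for $\mathcal{C}$ are $\sum_P a_P=\sum_H b_H=7M$ (a plane has $7$ points and lies in $7$ hyperplanes) and $\sum_P\binom{a_P}{2}=\sum_H\binom{b_H}{2}=s$, where $s$ denotes the number of unordered pairs of codewords meeting in a point --- here one invokes that two codewords meeting in a point span a unique hyperplane and otherwise span $V$. From $\sum_P a_P=7M$ and $a_P\le 9$ one gets the Johnson bound $M\le 81$, with equality forcing $a_P\equiv 9$ and, symmetrically, $b_H\equiv 9$. To close the residual range $77<M\le 81$ I would pass to a finer local analysis. If $a_P=9$, the nine codewords through $P$ form a \emph{maximal} partial line spread of $\PG(4,2)$, whose set of $4$ holes realises one of only a few isomorphism types; dually, a hyperplane with $b_H$ close to $9$ carries a (near-)optimal $(5,b_H,4;3)_2$ code, and these are classified as well. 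Substituting the admissible local patterns into the standard identities pins the multiplicity profiles $(a_P)_P$ and $(b_H)_H$ of a hypothetical code with $M\ge 78$ down to a short list of cases, each of which is then ruled out by tracking how the covered lines through adjacent points are forced to match up. This gluing argument is the crux and the principal obstacle; it is organised as a structured, isomorphism-pruned case distinction whose more arithmetical branches can be verified mechanically.

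\textbf{Lower bound.} It remains to produce a code with $M=77$, which is exactly the computer-free construction given in the subsequent sections: a lifted maximum rank distance code of size $q^6$ --- the ``graphs'' $\{(x,xA):x\in\F_q^3\}$ of an MRD set of $3\times 3$ matrices of minimum rank distance $2$, all of which avoid a fixed distinguished plane $W_\infty=\{0\}\oplus\F_q^3$ --- augmented by a carefully chosen family of $2q^2+2q+1$ further planes, each meeting $W_\infty$ nontrivially, yielding in total a $(6,q^6+2q^2+2q+1,4;3)_q$ code; for $q=2$ this is $64+13=77$ planes. Combined with the upper bound, $\smax_2(6,4;3)=77$.

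\textbf{Classification.} For an optimal code one has $\sum_P(9-a_P)=63\cdot 9-7\cdot 77=28$, so the multiplicity profile is severely constrained and, modulo $\GL(6,2)$, only finitely many profiles survive the standard identities together with the admissible hole patterns. Starting from each admissible seed --- for instance a hyperplane attaining $b_H=9$ together with its optimal $(5,9,4;3)_2$ subcode (one of the few classified types), or a point of multiplicity $9$ together with its maximal partial spread --- I would extend by backtracking to full $77$-codes, pruning with fact (i) and the admissible hole configurations and discarding isomorphic partial solutions via a canonical form (equivalently, a canonical labelling of an associated incidence structure). The enumeration terminates with exactly five isomorphism classes, and one checks that the explicit construction above occurs among them. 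The delicate points are keeping the backtracking feasible --- which rests on strong pruning invariants extracted from the local structure and on effective isomorph rejection --- and certifying that the enumeration is complete.
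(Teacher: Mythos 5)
Your counting set-up is sound (the identities $\sum_P a_P=7M$, $a_P\le\smax_2(5,4;2)=9$, hence $M\le 81$, and the deficiency count $\sum_P(9-a_P)=28$ for $M=77$ all check out), and your lower bound correctly points to the construction of Section~\ref{sec:nocomp}. But the two steps that actually constitute the theorem --- closing the gap from $81$ to $77$ and the classification into five classes --- are only announced, not carried out. You write that a ``finer local analysis'' pins the multiplicity profiles down to a short list of cases ``each of which is then ruled out by tracking how the covered lines through adjacent points are forced to match up,'' and that the ``more arithmetical branches can be verified mechanically.'' No such hand-organised case distinction is known: the paper proves $\smax_2(6,4;3)\le 77$ only by an exhaustive computer search, and explicitly lists a computer-free proof of this upper bound as an open problem (Open Problem~3 in Section~\ref{sec:concl}). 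So the crux you defer is precisely the part that, as far as anyone knows, cannot be done the way you sketch; asserting it as a plan leaves the theorem unproved.

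It is also worth comparing the shape of the search you propose with the one the paper actually runs, because the difference is what makes the computation feasible. You would seed the backtracking from a single point of degree $9$ (a $9$-configuration, i.e.\ one of the four maximal partial spreads of size $9$ in $\PG(4,2)$) or from a hyperplane with $b_H=9$. The paper shows this is still far too large a search space and interposes a second counting step: Lemma~\ref{lma:9conf} ($M\ge 73$ forces a point of degree $9$) is followed by Lemma~\ref{lma:17conf} ($M\ge 74$ forces two codeword-connected points of degree $9$, i.e.\ a $17$-configuration), and the $12770$ isomorphism types of $17$-configurations are then each extended by an integer linear program with the additional dual constraints $b_H\le 9$ per hyperplane and at most one codeword per solid. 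Your proposal is in the same spirit but misses this intermediate classification layer and the quantitative lemmas that justify it; without something of that kind the enumeration you describe would not terminate in practice, and without the computation it is not a proof at all.
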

In the language of Finite Geometry, Theorem~\ref{thm:main} says that
the maximum number of planes in $\PG(5,2)$ intersecting each other
in at most a point is $77$, with five optimal solutions up to
geometric equivalence. Theorem~\ref{thm:main} improves on the
previously known inequality $77\leq\smax_2(6,4;3)\leq 81$, the lower
bound being due to a computer construction of a binary $(6,77,4;3)$
subspace code in \cite{kohnert-kurz08}.\footnote{For the (rather
  elementary) upper bound see Lemma~\ref{lma:bound6}.}

The remaining numbers $\smax_2(6,d;k)$ are known and easy to find. A
complete list is given in Table~\ref{tbl:smax_2(6,d;k)}, the new entry
being indicated in bold
type.\footnote{$\smax_2(6,2k;k)=\frac{2^6-1}{2^k-1}$ is equal to the
  size of a $(k-1)$-spread (partition of the point set into $(k-1)$-flats)
  of $\PG(5,2)$.}

\begin{table}[htbp]
  \centering
  \(
  \begin{array}{c|ccc}
    k\backslash d&2&4&6\\\hline
    1&63\\
    2&651&21\\
    3&1395&\mathbf{77}&9\\
  \end{array}
  \)
  \caption{The numbers $\smax_2(6,d;k)\label{tbl:smax_2(6,d;k)}$} 
\end{table}

Theorem~\ref{thm:main} was originally obtained by an extensive
computer search, which is described in Section~\ref{sec:comp}.
Subsequently, inspired by an analysis of the data on the five extremal
codes provided by the search (cf.\ Section~\ref{ssec:analysis}) and
using further geometric ideas, we were able to produce a computer-free
construction of one type of extremal code. This is described in
Section~\ref{sec:nocomp}. It remains valid for all prime powers $q>2$,
proving the existence of $q$-ary $(6,q^6+2q^2+2q+1,4;3)$ subspace
codes for all $q$. This improves the hitherto best known construction
with the parameters $(6,4,q^6 + q^2 + 1; 3)$ in
\cite[Ex.~1.4]{trautmann-rosenthal10} by $q^2 + 2q$ codewords.  As a
consequence, we have
\begin{theorem}
  \label{thm:allq}
  $\smax_q(6,4;3)\geq q^6+2q^2+2q+1$ for all prime powers $q\geq 3$.
\end{theorem}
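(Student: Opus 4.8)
The plan is to prove Theorem~\ref{thm:allq} by exhibiting, for every prime power $q$, an explicit constant-dimension code $\mathcal{C}\subseteq\gaussm{\F_q^6}{3}$ with $\#\mathcal{C}=q^6+2q^2+2q+1$ and all pairwise intersections of dimension at most one. Since $q^6+2q^2+2q+1>q^3+1$, not all members of such a $\mathcal{C}$ can be pairwise disjoint, so once we know $\dim(U\cap U')\le 1$ throughout, the minimum distance equals $4$ and $\smax_q(6,4;3)\ge q^6+2q^2+2q+1$ follows from the definition of $\smax_q$. Concretely, I would fix a decomposition $\F_q^6=E\oplus E'$ into two planes, identify the $3$-subspaces complementary to $E$ with graphs $U_A=\{(x,xA):x\in\F_q^3\}$ of linear maps $A\in\F_q^{3\times 3}$ (so that $\dim(U_A\cap U_B)=3-\rank(A-B)$), and assemble $\mathcal{C}$ from three pieces: the singleton $\{E\}$; the lifted MRD code $\{U_A:A\in\mathcal{M}\}$, where $\mathcal{M}$ is a Gabidulin code of minimum rank distance $2$, of size exactly $q^6$; and a block $\mathcal{B}$ of $2q^2+2q$ further planes, each meeting $E$ in exactly a point, whose description combines a projective-geometric ingredient (a suitable family of points, or a spread-like substructure, attached to $E$) with a finite-field ingredient (coordinates built from norms, traces, or low-degree polynomials of $\F_{q^3}/\F_q$), the latter being what makes the count uniform in $q$. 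The three pieces are visibly disjoint as sets of planes, since they are separated by $\dim(\,\cdot\cap E)\in\{3,0,1\}$, so $\#\mathcal{C}=1+q^6+(2q^2+2q)$.

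The verification then splits into the evident case distinctions. $\{E\}$ versus the MRD block: $\dim(E\cap U_A)=0$. $\{E\}$ versus $\mathcal{B}$: $\dim(E\cap W)=1$ by the very definition of $\mathcal{B}$. The MRD block internally: $\dim(U_A\cap U_B)=3-\rank(A-B)\le 1$ because $\rank(A-B)\ge 2$ for $A\ne B$ in $\mathcal{M}$. Inside $\mathcal{B}$: two planes $W,W'$ already meet $E$ in at most a common point, so one only has to exclude a further common point off $E$ --- a routine computation carried out uniformly over the finite-field parameters indexing $\mathcal{B}$. The genuinely delicate case is the MRD block versus $\mathcal{B}$: one must show that every $W\in\mathcal{B}$ meets every graph $U_A$ with $A\in\mathcal{M}$ in dimension at most one. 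Here $\dim(U_A\cap W)\ge 2$ translates into the solvability of a small linear system whose coefficient matrix has entries polynomial in the coordinates of $A$ and in the finite-field parameters of $W$, so that the claim becomes the assertion that a certain polynomial relation (a vanishing determinant, or a ``norm-equals-trace''-type identity) has no solution with $A\in\mathcal{M}$.

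The finite-fields input is essential precisely here, and I expect this to be the main obstacle: the Gabidulin code $\mathcal{M}$ and the parametrization of $\mathcal{B}$ must be chosen compatibly so that the obstruction relations become vacuous, and then this must be proved --- presumably by factoring the relevant polynomials over $\F_{q^3}$ (or $\F_{q^6}$) and arguing that a forbidden root would force $\rank(A-B)\le 1$ for two distinct codewords $A,B\in\mathcal{M}$, contradicting the MRD property; alternatively one shows directly that such a root cannot lie in the subfield where the coordinates of $\mathcal{M}$ and of $\mathcal{B}$ live. Finally I would note that the construction is uniform in $q$ and that, for $q=2$, it produces a code of size $77$, consistent with --- and realizing one of the five isomorphism types of --- Theorem~\ref{thm:main}; the hypothesis $q\ge 3$ in Theorem~\ref{thm:allq} only reflects that for $q=2$ the exact value is already pinned down by Theorem~\ref{thm:main}.
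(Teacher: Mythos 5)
Your overall frame (a distinguished plane $S$, a lifted $(3,3,2)$ Gabidulin code, and extra planes sorted by $\dim(\,\cdot\cap S)$) matches the paper's, but the specific decomposition you propose cannot work, and the obstruction sits exactly in the case you flag as ``genuinely delicate.'' By Lemma~\ref{lma:lmrd}, the $q^6$ planes of the \emph{full} lifted MRD code already cover every line of $\PG(5,q)$ disjoint from $S$ exactly once. A plane $W$ with $\dim(W\cap S)=1$ contains $q^2$ lines disjoint from $S$, each of which therefore already lies in some codeword $U_A$ of the LMRD code, forcing $\sdist(W,U_A)=2$. Hence \emph{no} plane meeting $S$ in a point can be adjoined to the full LMRD code, for any choice of Gabidulin code and any parametrization of your block $\mathcal{B}$; the ``obstruction relations'' you hope to render vacuous by a compatible choice of finite-field parameters are unconditionally solvable. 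The paper makes this explicit: any $(6,M,4;3)_q$ code containing a full LMRD subcode satisfies $M\leq q^6+q^2+q+1$ (a special case of a theorem of Etzion and Silberstein), which is strictly smaller than the target $q^6+2q^2+2q+1$. There is also a minor incompatibility in your census: you include the plane $S$ itself together with planes meeting $S$ in a line is not an issue for you, but note that $\sdist(S,W)=2$ whenever $\dim(S\cap W)=2$, which constrains how the remaining $2q^2+2q$ planes may sit relative to $S$.

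The missing idea is to \emph{remove} part of the LMRD code before adding anything. The paper deletes the $q^3$ planes corresponding to the constant-rank-$2$ subspace $\mathcal{R}=\{ux^q-u^qx;u\in\F_{q^3}\}$ of the Gabidulin code; the $(q^2+q+1)q^3$ lines thereby freed are regrouped, hyperplane section by hyperplane section, into $q^3+q^2+q$ new planes each meeting $S$ in a point, with pairwise subspace distance $4$ preserved (Lemmas~\ref{lma:crucial} and~\ref{lma:simul}, Example~\ref{ex:simul}). This already gives $q^6-q^3+q^3+q^2+q=q^6+q^2+q$ codewords. A final layer of $q^2+q+1$ planes meeting $S$ in a \emph{line} (not in a point, and not $S$ itself) is then adjoined in Lemma~\ref{lma:A} via a trace condition on $\F_{q^3}/\F_q$, yielding the claimed $q^6+2q^2+2q+1$. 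So the intersection-with-$S$ profile of the actual code is $0^{q^6-q^3}\,1^{q^3+q^2+q}\,2^{q^2+q+1}$ rather than your $3^{1}\,1^{2q^2+2q}\,0^{q^6}$; without the remove-and-rearrange step your construction caps out at $q^6+q^2+q+1$.
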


The ideas and methods employed in
Section~\ref{sec:nocomp} may
also prove useful, as we think, for the construction of good
constant-dimension subspace codes with other parameter sets. Since
they circumvent the size restriction imposed on constant-dimension
codes containing a lifted MRD code, they can
be seen as a partial answer to Research Problem~2 in \cite{etzion13b}.

\section{Preparations}\label{sec:prelim}
\subsection{The Recursive Upper Bound and Partial
  Spreads}\label{ssec:bound}
In the introduction we have seen that a $(v,M,d;k)_q$
constant-dimension subspace code is the same as a set $\mathcal{C}$ of
($k-1$)-flats in $\PG(v-1,q)$ with $\#\mathcal{C}=M$ and the
following property: $t=k-d/2+1$ is the smallest integer such that
every ($t-1$)-flat of $\PG(v-1,q)$ is contained in at most one
($k-1$)-flat of $\mathcal{C}$. This property (and $t\geq 2$) implies
that for any ($t-2$)-flat $F$ of $\PG(v-1,q)$ the ``derived subspace code''
$\mathcal{C}_F=\{U\in\mathcal{C};U\supseteq F\}$ forms a partial
($k-t$)-spread\footnote{A partial $t$-spread is a set of $t$-flats which are
  pairwise disjoint when viewed as point sets.} in the quotient geometry
$\PG(v-1,q)/F\cong\PG(v-t,q)$. The maximum size of such a partial
spread is $\smax_q\bigl(v-t+1,2(k-t+1);k-t+1\bigr)
=\smax_q(v-k+d/2,d;d/2)$. Counting the pairs $(F,U)$ with a ($t-2$)-flat
$F$ and a ($k-1$)-flat $U\in\mathcal{C}$ containing $F$ in two ways,
we obtain the following bound, which can also be easily derived by iterating
the Johnson type bound II in \cite[Th.~3]{xia-fangwei09} or
\cite[Th.~4]{etzion-vardy11a}.
\begin{lemma}
  \label{lma:bound}
  The maximum size $M=\smax_q(v,d;k)$ of a $(v,M,d;k)_q$ subspace code
  satisfies the upper bound
  \begin{equation}
    \label{eq:bound}
    \smax_q(v,d;k)\leq\frac{\gauss{v}{t-1}{q}}{\gauss{k}{t-1}{q}}
    \cdot\smax_q\bigl(v-k+d/2,d;d/2\bigr),
  \end{equation}
  and the second factor on the right of \eqref{eq:bound} is equal to
  the maximum size of a partial ($d/2-1$)-spread in $\PG(v-k+d/2-1,q)$.
\end{lemma}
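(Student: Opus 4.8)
The plan is to carry out the two-way count of incident pairs sketched in the paragraph preceding the lemma. Set $t=k-d/2+1$, so that $1\le t\le k$; the case $t\ge 2$ (equivalently $d<2k$) is the substantial one, while $t=1$ is dealt with at the end. Let $N$ be the number of pairs $(F,U)$ in which $F$ is a $(t-2)$-flat of $\PG(v-1,q)$ and $U\in\mathcal{C}$ satisfies $F\subseteq U$.

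Counting with $U$ fixed: each $(k-1)$-flat $U$ contains exactly $\gauss{k}{t-1}{q}$ flats of projective dimension $t-2$, whence $N=M\cdot\gauss{k}{t-1}{q}$. Counting with $F$ fixed: the derived code $\mathcal{C}_F=\{U\in\mathcal{C};U\supseteq F\}$, after passing to the quotient geometry $\PG(v-1,q)/F\cong\PG(v-t,q)$, becomes a partial $(k-t)$-spread. Indeed, for distinct $U,U'\in\mathcal{C}_F$ the minimum-distance hypothesis forces $\dim(U\cap U')\le k-d/2=t-1=\dim F$, and since $F\subseteq U\cap U'$ this yields $U\cap U'=F$, i.e.\ the images $U/F$ intersect pairwise in the zero subspace of $V/F$. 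Hence $\#\mathcal{C}_F\le\smax_q\bigl(v-t+1,2(k-t+1);k-t+1\bigr)$, and summing over the $\gauss{v}{t-1}{q}$ flats $F$ gives $N\le\gauss{v}{t-1}{q}\cdot\smax_q\bigl(v-t+1,2(k-t+1);k-t+1\bigr)$.

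Equating the two values of $N$ and substituting $t=k-d/2+1$ --- so that $v-t+1=v-k+d/2$, $2(k-t+1)=d$ and $k-t+1=d/2$ --- produces exactly the bound \eqref{eq:bound}. In the remaining case $t=1$ (that is, $d=2k$) both Gaussian binomials equal $1$, the right-hand side of \eqref{eq:bound} collapses to $\smax_q(v,2k;k)$, and the inequality is a trivial identity. For the final assertion one simply unwinds definitions: a partial $(d/2-1)$-spread in $\PG(v-k+d/2-1,q)$ is a family of $(d/2)$-dimensional subspaces of $\F_q^{v-k+d/2}$ meeting pairwise trivially, which is precisely a constant-dimension code with subspace distance $d$ and dimension $d/2$; its maximum cardinality is therefore $\smax_q(v-k+d/2,d;d/2)$, the second factor of \eqref{eq:bound}.

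The argument is short, and the only place that requires care is the identification of $\mathcal{C}_F$ with a partial spread in the quotient geometry together with the accompanying dimension bookkeeping; beyond that I expect no real obstacle.
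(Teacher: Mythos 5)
Your proof is correct and is exactly the argument the paper itself gives (in the paragraph preceding the lemma rather than in a displayed proof): the double count of incident pairs $(F,U)$ with $F$ a $(t-2)$-flat, the identification of each derived code $\mathcal{C}_F$ with a partial $(k-t)$-spread in $\PG(v-1,q)/F\cong\PG(v-t,q)$, and the parameter substitution $t=k-d/2+1$. The dimension bookkeeping and the handling of the degenerate case $t=1$ are both accurate.
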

The numbers $\smax_q(v,2\delta;\delta)$ are unknown in general. The
cases $\delta\mid v$ (in which
$\smax_q(v,2\delta;\delta)=\frac{q^v-1}{q^\delta-1}$ is realized by
any ($\delta-1$)-spread in $\PG(v-1,q)$)
and $\delta=2$ provide exceptions. The 
numbers $\smax_q(v,4;2)$ (maximum size of a partial line-spread in
$\PG(v-1,q)$ are known for all $q$ and $v$ (cf.\ Sections 2, 4 of
\cite{beutelspacher75} or Sections 1.1, 2.2 of
\cite{eisfeld-storme00}):
\begin{equation}
  \label{eq:beutel}
  \smax_q(v,4;2)=
  \begin{cases}
    q^{v-2}+ q^{v-4}+\dots+q^2+1&\text{if $v$ is even},\\
    q^{v-2}+ q^{v-4}+\dots+q^3+1&\text{if $v$ is odd}.
  \end{cases}
\end{equation}
In particular $\smax_q(5,4;2)=q^3+1$. Substituting this into
Lemma~\ref{lma:bound} gives the best known general
upper bound for the size of $(6,M,4;3)_q$ subspace codes, the case on
which we will focus subsequently:
\begin{lemma}
  \label{lma:bound6}
  $\smax_q(6,4;3)\leq(q^3+1)^2$.
\end{lemma}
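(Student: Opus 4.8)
The plan is to apply Lemma~\ref{lma:bound} with the concrete parameters $v=6$, $k=3$, $d=4$. First I would compute $t=k-d/2+1=3-2+1=2$, so the bound~\eqref{eq:bound} reads
\[
  \smax_q(6,4;3)\leq\frac{\gauss{6}{1}{q}}{\gauss{3}{1}{q}}\cdot\smax_q(6-3+2,4;2)=\frac{q^6-1}{q^3-1}\cdot\smax_q(5,4;2).
\]
The first Gaussian binomial quotient simplifies to $\gauss{6}{1}{q}/\gauss{3}{1}{q}=(q^6-1)/(q^3-1)=q^3+1$, and the second factor is the maximum size of a partial line-spread in $\PG(4,q)$, which by~\eqref{eq:beutel} (with $v=5$ odd) equals $q^{3}+1$. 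Multiplying the two factors gives $\smax_q(6,4;3)\leq(q^3+1)^2$, which is exactly the claim.

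The key steps, in order, are: (i) identify the parameter $t$ governing the recursion; (ii) substitute into Lemma~\ref{lma:bound} and read off that the first factor is $\gauss{6}{1}{q}/\gauss{3}{1}{q}$; (iii) simplify this to $q^3+1$ using $\gauss{n}{1}{q}=(q^n-1)/(q-1)$; (iv) invoke the known value $\smax_q(5,4;2)=q^3+1$ recorded right before the statement, which itself is the odd case of~\eqref{eq:beutel}; (v) combine. All of these are essentially bookkeeping, since the two nontrivial ingredients---the recursive Johnson-type bound and the classification of maximal partial line-spreads---are already available to us from earlier in the paper.

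There is no real obstacle here: the lemma is a direct specialization of machinery established in the preceding paragraphs, and the only thing to be careful about is correctly tracking which case of~\eqref{eq:beutel} applies (the ambient space is $\PG(4,q)$, i.e.\ $v=5$ is odd, so the formula gives $q^3+1$ and not $q^3+q+1$). If one wanted an entirely self-contained double-counting argument instead of citing Lemma~\ref{lma:bound}, one would count incidences between points of $\PG(5,q)$ and the planes of a putative code $\mathcal{C}$: each plane contains $q^2+q+1$ points, each point lies in at most $\smax_q(5,4;2)=q^3+1$ members of $\mathcal{C}$ (since those members meet pairwise in exactly that point, hence induce a partial line-spread in the quotient $\PG(5,q)/\text{point}\cong\PG(4,q)$), and $\PG(5,q)$ has $q^5+q^4+\dots+1$ points; the resulting inequality $(q^2+q+1)\#\mathcal{C}\leq(q^5+q^4+q^3+q^2+q+1)(q^3+1)$ again yields $\#\mathcal{C}\leq(q^3+1)^2$. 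But given the tools already assembled, the one-line specialization of Lemma~\ref{lma:bound} is the natural route.
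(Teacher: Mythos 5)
Your proof is correct and follows exactly the paper's route: the paper obtains this lemma by noting $\smax_q(5,4;2)=q^3+1$ (the odd case of \eqref{eq:beutel}) and substituting into Lemma~\ref{lma:bound} with $t=2$, which is precisely your computation. The alternative double-counting argument you sketch is also valid, but it is just the incidence count underlying Lemma~\ref{lma:bound} made explicit, so it is not a genuinely different approach.
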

For a subspace code $\mathcal{C}$ of size close to this upper bound
there must exist many points $P$ in $\PG(5,q)$ such that
the derived code $\mathcal{C}_P$ forms a partial spread of maximum size
$q^3+1$ in $\PG(5,q)/P\cong\PG(4,q)$. Available information 
on such partial spreads may then be used in the search for
(and classification of) optimal $(6,M,4;3)_q$ subspace codes.

The hyperplane section $\{E\cap H;E\in\mathcal{T}\}$
of a plane spread $\mathcal{T}$ in $\PG(5,q)$ with respect to any
hyperplane $H$ yields an example of a
partial spread of maximum size $q^3+1$ in $\PG(4,q)$, provided one
replaces the unique plane $E\in\mathcal{T}$ contained in the
hyperplane by a line $L\subset E$.  For the smallest case $q=2$, a
complete classification of partial spreads of size $2^3+1=9$ in
$\PG(4,2)$ is known; see \cite{shaw00,gordon-shaw-soicher04}. We will
describe this result in detail, since it forms the basis for the
computational work in Section~\ref{sec:comp}.

First let us recall that a set $\mathcal{R}$ of $q+1$ pairwise skew
lines in $\PG(3,q)$ is called a \emph{regulus} if every line in
$\PG(3,q)$ meeting three lines of $\mathcal{R}$ forms a transversal of
$\mathcal{R}$. Any three pairwise skew lines $L_1$, $L_2$, $L_3$ of
$\PG(3,q)$ are contained in a unique regulus
$\mathcal{R}=\mathcal{R}(L_1,L_2,L_3)$. The transversals of the lines
in $\mathcal{R}$ form another regulus $\mathcal{R}^\opp$, the
so-called \emph{opposite regulus}, which covers the same set of
$(q+1)^2$ points as $\mathcal{R}$.  In a similar vein, we call a set
of $q+1$ pairwise skew lines in $\PG(4,q)$ a regulus if they span a
$\PG(3,q)$ and form a regulus in their span. For $q=2$ things
simplify: A regulus in $\PG(3,2)$ is just a set of three pairwise skew
lines, and a regulus in $\PG(4,2)$ is a set of three pairwise skew
lines spanning a solid ($3$-flat).

Now suppose that $\mathcal{S}$ forms a partial line spread of size $9$
in $\PG(4,2)$. Let $Q$ be the set of $4$ \emph{holes} (points not
covered by the lines in $\mathcal{S}$).  An easy counting argument
gives that each solid $H$, being a hyperplane of
$\PG(4,2)$, contains $\alpha\in\{1,2,3\}$ lines of $\mathcal{S}$ and
$\beta\in\{0,2,4\}$ holes, where $2\alpha+\beta=6$.  This implies that
$Q=E\setminus L$ for some plane $E$ and some line $L\subset E$.  Since
a solid $H$ contains no hole iff $H\cap E=L$, there are $4$ such
solids ($\beta=0$, $\alpha=3$) and hence $4$ reguli contained in
$\mathcal{S}$. Moreover, a line $M$ of $\mathcal{S}$ is contained in
$4$ reguli if $M=L$, two reguli if $M\cap L$ is a point, or one
regulus if $M\cap L=\emptyset$. This information suffices to determine
the ``regulus pattern'' of $\mathcal{S}$, which must be either (i)
four reguli sharing the line $L$ and being otherwise disjoint
(referred to as Type~X in \cite{shaw00,gordon-shaw-soicher04}), or
(ii) one regulus containing $3$ lines through the points of $L$ and
three reguli containing one such line (Type~E), or (iii) three reguli
containing $2$ lines through the points of $L$ and one regulus
consisting entirely of lines disjoint from $E$
(Type~I$\Delta$).\footnote{Note that the letters used for the types
  look like the corresponding regulus patterns.}

The partial spread obtained from a plane spread in $\PG(5,2)$ as
described above has Type~X. Replacing one of its reguli by the
opposite regulus gives a partial spread of Type~E. A partial spread of
Type~I$\Delta$ can be constructed as follows: Start with $3$ lines
$L_1$, $L_2$, $L_3$ in $\PG(4,2)$ not forming a regulus (i.e.\ not
contained in a solid). The lines $L_i$ have a unique transversal line
$L$ (the intersection of the three solids $H_1=\langle
L_2,L_3\rangle$, $H_2=\langle L_1,L_3\rangle$, $H_3=\langle
L_1,L_2\rangle$).\footnote{This property remains true for any
  $q$. Thus three pairwise skew lines in $\PG(4,q)$ have $q+1$
  transversals if they are contained in a solid, and a unique
  transversal otherwise.}  In $\PG(4,2)/L\cong\PG(2,2)$ the solids
$H_1$, $H_2$, $H_3$ form the sides of a triangle. Hence there exist a
unique plane $E$ and a unique solid $H_4$ in $\PG(4,2)$ such that
$H_i\cap E=L$ for $1\leq i\leq 4$. Let $E_i=\langle L,L_i\rangle$ for
$1\leq i\leq 3$ (the planes in which the solids $H_1$, $H_2$, $H_3$
intersect). Choose $3$ further lines $L_i'\subset H_i$, $1\leq i\leq
3$, such that $L_1$, $L_2$, $L_3$, $L_1'$, $L_2'$, $L_3'$ are pairwise
skew.\footnote{This can be done in $8$ possible ways, ``wiring'' the
  $6$ points in $(E_1\cup E_2\cup E_3)\setminus(L_1\cup L_2\cup
  L_3)$. All these choices lead to isomorphic partial spreads of size
  $6$, as can be seen using the fact that $(L_1\cup L_2\cup
  L_3)\setminus L$ forms a projective basis of $\PG(4,2)$.} Let
$y_i=L_i'\cap H_4$ for $1\leq i\leq 3$. Then
$L'=\{y_1,y_2,y_3\}\subset H_4$ must be a line, since it is the only
candidate for the transversal to $L_1'$, $L_2'$, $L_3'$ (which span
the whole geometry).\footnote{The image of the transversal in
  $\PG(4,2)/L$ must be a line, leaving only one choice for the
  transversal.} Hence $H_4\cong\PG(3,2)$ contains the disjoint lines
$L$, $L'$ and $9$ points not covered by the $6$ lines chosen so far. A
partial spread of Type I$\Delta$ can now be obtained by completing
$L$, $L'$ to a spread of $H_4$.

With a little more effort, one can show that there are exactly $4$
isomorphism types of partial spreads of size $9$ in $\PG(4,2)$, one of
Type~X, one of Type~E, and two of Type~I$\Delta$, represented by the
two possible ways to complete $L$, $L'$ to spread of $H_4$; see
\cite{shaw00,gordon-shaw-soicher04} for details.

\subsection{A property of the reduced row echelon form}
Subspaces $U$ of $\F_q^v$ with $\dim(U)=k$ are conveniently
represented by matrices $\mat{U}\in\F_q^{k\times v}$ with row space
$U$; notation $U=\langle\mat{U}\rangle$. It is well-known that every
$k$-dimensional subspace of $\F_q^v$ has a unique representative
$\mat{U}$ in \emph{reduced row echelon form}, which is defined as
follows: $\mat{U}=(u_{ij})=(\vek{u}_1|\vek{u}_2|\dots|\vek{u}_v)$
contains the $k\times k$ identity matrix $\imat_k$ as a submatrix,
i.e.\ there exist column indices $1\leq j_1<j_2<\dots<j_k\leq v$
(``pivot columns'') such that $\vek{u}_{j_i}=\uvek_i$ (the $i$-th
standard unit vector in $\F_q^k$) for $1\leq i\leq k$, and every entry
to the left of $u_{i,j_i}=1$ (``pivot element'') is
zero.\footnote{Although $\imat_k$ may occur multiple times as a
  submatrix of $\mat{U}$, both properties together determine the set
  $\{j_1,\dots,j_k\}$ of pivot columns uniquely.}  
The reduced row echelon form of $\{\vek{0}\}$ is defined as the ``empty
  matrix'' $\emptyset$.

  We will call $\mat{U}$ the \emph{canonical matrix} of $U$ and write
  $\mat{U}=\cm(U)$. Denoting the set of all canonical matrices
  (matrices in reduced row echelon form without all-zero rows) over
  $\F_q$ with $v$ columns (including the empty matrix) by
  $\cms=\cms(v,q)$ and writing
  $\mathcal{L}=\subspaces(v,q)=\subspaces(\F_q^v)$, we have that
  $\mathcal{L}\to\cms$, $U\mapsto\cm(U)$ and $\cms\to\mathcal{L}$,
  $\mat{U}\mapsto\langle\mat{U}\rangle$ are mutually inverse
  bijections.

  In addition we consider $\cms(\infty,q)=\bigcup_{v=1}^\infty\cms(v,q)$, the
  set of all matrices over $\F_q$ in canonical form.
  The following property of $\cms(\infty,q)$ seems not to
  be well-known, but forms the basis for determining the
  canonical matrices of all subspaces of $U$ from $\cm(U)$; see
  Part~(ii).
  \begin{lemma}
    \label{lma:cm}
    \begin{enumerate}[(i)]
    \item\label{lma:cm:mult_closed} $\cms(\infty,q)$ is closed with respect to multiplication
      whenever it is defined; i.e., if
      $\mat{Z},\mat{U}\in\cms(\infty,q)$ and the number of columns of
      $\mat{Z}$ equals the number of rows of $\mat{U}$, then
      $\mat{ZU}\in\cms(\infty,q)$ as well.
    \item\label{lma:cm:subspaces} Let $U$ and $V$ be subspaces of $\F_q^v$ with $V\subseteq
      U$. Then $\cm(V)=\mat{Z}\cdot\cm(U)$ for a unique matrix
      $\mat{Z}$ in reduced row echelon form. More precisely, if
      $\dim(U)=k$ then $\mat{Z}\mapsto\mat{Z}\cdot\cm(U)$ defines a
      bijection from $\cms(k,q)$ to the set of all matrices in
      $\cms(v,q)$ that represent subspaces of $U$, and
      $\langle\mat{Z}\rangle\mapsto\langle\mat{Z}\cdot\cm(U)\rangle$
      defines a bijection from the set of subspaces of $\F_q^k$ to the
      set of subspaces of $U$.
    \end{enumerate}
  \end{lemma}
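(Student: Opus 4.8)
The plan is to prove part~(i) by an explicit bookkeeping argument on pivot columns, and then to derive part~(ii) almost formally from the vector space isomorphism $\F_q^k\to U$ induced by $\cm(U)$.

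For part~(i) I would write $\mat{Z}\in\cms(\infty,q)$ with $r$ rows, $k$ columns and pivot columns $1\le c_1<\dots<c_r\le k$, and $\mat{U}\in\cms(\infty,q)$ with $k$ rows, $v$ columns and pivot columns $1\le j_1<\dots<j_k\le v$. First note that the rows of $\mat{U}$ are linearly independent (projecting onto the pivot columns yields $\imat_k$), so $\vek{x}\mapsto\vek{x}\mat{U}$ is an injective linear map $\F_q^k\to\F_q^v$; hence $\rank(\mat{Z}\mat{U})=\rank(\mat{Z})=r$, and in particular $\mat{Z}\mat{U}$ has no all-zero row. Next I would show that the pivot columns of $\mat{Z}\mat{U}$ are precisely $j_{c_1}<\dots<j_{c_r}$: column $j_{c_i}$ of $\mat{U}$ is $\uvek_{c_i}$, so column $j_{c_i}$ of $\mat{Z}\mat{U}$ is the $c_i$-th column of $\mat{Z}$, which equals $\uvek_i$ since $c_i$ is the $i$-th pivot column of $\mat{Z}$; thus $\mat{Z}\mat{U}$ contains $\imat_r$ in these columns. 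Finally I would check that every entry of $\mat{Z}\mat{U}$ to the left of one of these pivots vanishes: if $\ell<j_{c_i}$ then at most $c_i-1$ of the $j_m$ are $\le\ell$, so rows $m\ge c_i$ of $\mat{U}$ have their pivot strictly to the right of $\ell$, i.e.\ the $\ell$-th column of $\mat{U}$ is supported on coordinates $1,\dots,c_i-1$; row $i$ of $\mat{Z}$ is supported on coordinates $\ge c_i$ (its entries to the left of the pivot in column $c_i$ vanish); these supports are disjoint, so the $(i,\ell)$-entry of $\mat{Z}\mat{U}$ is $0$. Together with the uniqueness of the pivot set recorded in the footnote above, this shows $\mat{Z}\mat{U}\in\cms(v,q)$.

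For part~(ii) set $\mat{U}=\cm(U)$ and consider $\phi\colon\F_q^k\to U$, $\vek{x}\mapsto\vek{x}\mat{U}$. Since the rows of $\mat{U}$ form a basis of $U$, $\phi$ is a vector space isomorphism, so $\langle\mat{Z}\rangle\mapsto\langle\mat{Z}\mat{U}\rangle=\phi(\langle\mat{Z}\rangle)$ is a bijection from the subspaces of $\F_q^k$ onto the subspaces of $U$. Given $\mat{Z}\in\cms(k,q)$, part~(i) gives $\mat{Z}\mat{U}\in\cms(v,q)$, and $\rank(\mat{Z}\mat{U})=\rank(\mat{Z})$ shows $\mat{Z}\mat{U}$ has full row rank, hence is the canonical matrix of its own row space; equivalently $\cm(\phi(W))=\cm(W)\cdot\mat{U}$ for every subspace $W$ of $\F_q^k$. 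Applying this with $W=\phi^{-1}(V)$ yields $\cm(V)=\mat{Z}\cdot\cm(U)$ with $\mat{Z}=\cm(W)$ in reduced row echelon form, and simultaneously proves that $\mat{Z}\mapsto\mat{Z}\mat{U}$ maps $\cms(k,q)$ \emph{onto} the set of canonical matrices of subspaces of $U$. Injectivity of this map (hence uniqueness of $\mat{Z}$) follows since $\mat{Z}\mat{U}=\mat{Z}'\mat{U}$ forces $(\mat{Z}-\mat{Z}')\mat{U}=\mat{0}$, whence $\mat{Z}=\mat{Z}'$ because $\vek{x}\mapsto\vek{x}\mat{U}$ is injective.

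The only genuinely delicate point is the echelon bookkeeping in part~(i)—pinning down which columns of $\mat{Z}\mat{U}$ are the pivot columns and verifying that the entries to their left all vanish. Once that is settled, part~(ii) is a formal consequence of $\phi$ being an isomorphism together with the rank identity $\rank(\mat{Z}\mat{U})=\rank(\mat{Z})$.
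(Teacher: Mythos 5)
Your proof is correct and takes essentially the same approach as the paper: part~(i) is the same pivot-column bookkeeping (the pivot columns of $\mat{Z}\mat{U}$ are $j_{c_1},\dots,j_{c_r}$, and entries to their left vanish because row $i$ of $\mat{Z}$ and column $\ell$ of $\mat{U}$ have disjoint supports), and part~(ii) is exactly the ``follows easily from (i)'' step that the paper leaves implicit, which you have filled in correctly via the isomorphism $\vek{x}\mapsto\vek{x}\,\cm(U)$.
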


  \begin{proof}
    (ii) follows easily from (i). For the proof of (i) assume
    $\mat{Z}=(z_{ri})\in\F_q^{l\times k}$, $\mat{U}=(u_{ij})\in\F_q^{k\times v}$, and
    $\mat{U}$ has pivot columns $j_1<\dots<j_k$. Then $\mat{Z}$
    appears as a submatrix of $\mat{ZU}$ in columns $j_1,\dots,j_k$,
    and hence $\imat_l$ appears as a submatrix of $\mat{ZU}$ as well,
    viz.\ in columns $j_{i(1)},j_{i(2)},\dots,j_{i(l)}$, where
    $i(1)<i(2)<\dots<i(l)$ are the pivot columns of $\mat{Z}$. Now
    suppose $j<j_{i(r)}$, so that the entry
    $(\mat{ZU})_{rj}=\sum_{i=1}^kz_{ri}u_{ij}$ lies to the left of
    the $r$-th pivot element of $\mat{ZU}$. Since $z_{ri}=0$ for
    $i<i(r)$, we have
    $(\mat{ZU})_{rj}=\sum_{i=i(r)}^kz_{ri}u_{ij}$. But $i\geq i(r)$
    implies $j_i\geq j_{i(r)}>j$ and hence $u_{ij}=0$. Thus
    $(\mat{ZU})_{rj}=0$, proving that $\mat{ZU}$
    is indeed canonical. 
  \end{proof}

  It is worth noting that for a given subspace $U$ of $\F_q^v$
  represented by its canonical matrix,
  Lemma~\ref{lma:cm}\ref{lma:cm:subspaces} provides an efficient
  way to enumerate all subspaces of $U$ (or all subspaces of $U$ of a
  fixed dimension).

\subsection{Geometry of Rectangular Matrices}\label{sec:lag}
Left multiplication with matrices in $\Mat(m,q)$ (the ring of $m\times
m$ matrices over $\F_q$) endows the $\F_q$-space $\F_q^{m\times n}$ of $m\times n$
matrices over $\F_q$ with the structure of a (left)
$\Mat(m,q)$-module. Following \cite{yt:JSCC7}, we call the
corresponding coset geometry \emph{left affine geometry of $m\times n$
matrices over $\F_q$} and denote it by $\LAG(m,n,q)$.\footnote{This
viewpoint is different from that in the classical Geometry of
Matrices \cite[Ch.~3]{wan96a}, which studies the structure of
$\F_q^{m\times n}$ as an $\Mat(m,q)$-$\Mat(n,q)$ bimodule.} For $0\leq
k\leq n$, the $k$-flats of $\LAG(m,n,q)$ are the cosets
$\mat{A}+\mathcal{U}$, where $\mat{A}\in\F_q^{m\times n}$ and the
submodule $\mathcal{U}$ consists of all matrices
$\mat{U}\in\F_q^{m\times n}$ whose row space $\langle\mat{U}\rangle$
is contained in a fixed $k$-dimensional subspace of $\F_q^n$. The size
of a $k$-flat is $\#(\mat{A}+\mathcal{U})=\#\mathcal{U}=q^{mk}$,
and the number of $k$-flats in $\LAG(m,n,q)$ is
$q^{m(n-k)}\gauss{n}{k}{q}$.\footnote{The $k$-flats fall into $\gauss{n}{k}{q}$
parallel classes, each consisting of $q^{m(n-k)}=q^{mn}/q^{mk}$ flats.}
Point residues $\LAG(m,n,q)/\mat{A}$ are isomorphic to $\PG(n-1,q)$.

Our interest in these geometries comes from the following
\begin{lemma}
  \label{lma:lag}
  $\mat{A}\mapsto\langle(\imat_m|\mat{A})\rangle$ maps $\LAG(m,n,q)$
  isomorphically onto the subgeometry $\attspace$ of $\PG(m+n-1,q)$
  whose $k$-flats, $0\leq k\leq n$, are the $(m+k)$-dimensional
  subspaces of $\F_q^{m+n}$ meeting
  $S=\langle\uvek_{m+1},\dots,\uvek_{m+n}\rangle$ in a subspace of
  dimension $k$. A parallel class
  $\{\mat{A}+\mathcal{U};\mat{A}\in\F_q^{m\times n}\}$ of $k$-flats is
  mapped to the set of ($m+k$)-dimensional subspaces of $\F_q^{m+n}$
  meeting $S$ in $U=\sum_{\mat{U}\in\mathcal{U}}\langle\mat{U}\rangle$
  (with coordinates rearranged in the obvious way), so that $U$
  represents the common $k$-dimensional ``space at infinity'' of the
  flats in $\mat{A}+\mathcal{U}$.\footnote{Note, however, that in
    contrast with the classical case $m=1$ of projective geometry over
    fields, points of $\attspace$ need not be either ``finite'' or
    ``infinite'': The pivots of the associated $m$-dimensional
    subspace of $\F_q^{m+n}$ may involve both parts of the coordinate
    partition $\{1,\dots,m\}\uplus\{m+1,\dots,m+n\}$.}
\end{lemma}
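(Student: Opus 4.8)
The plan is to check that the stated map $\varphi\colon\mat{A}\mapsto\langle(\imat_m\mid\mat{A})\rangle$ is a well-defined injection of point sets and that it carries $k$-flats to the announced family of $(m+k)$-subspaces, bijectively and incidence-preservingly. First I would observe that $(\imat_m\mid\mat{A})$ has rank $m$, so $\varphi(\mat{A})$ is genuinely an $m$-dimensional subspace, i.e.\ a point of $\PG(m+n-1,q)$; and $\varphi(\mat{A})=\varphi(\mat{A}')$ forces the two matrices to have the same row space, hence (comparing the first $m$ columns, where each row space contains a unique representative with $\imat_m$ in those positions, namely the reduced row echelon form relative to the pivot set $\{1,\dots,m\}$) $\mat{A}=\mat{A}'$. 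So $\varphi$ is injective on points. Conversely, an $m$-dimensional subspace of $\F_q^{m+n}$ meets $S=\langle\uvek_{m+1},\dots,\uvek_{m+n}\rangle$ trivially exactly when its projection onto the first $m$ coordinates is all of $\F_q^m$, i.e.\ exactly when it has a (unique) generator matrix of the form $(\imat_m\mid\mat{A})$; this identifies the image point set of $\varphi$ as the $0$-flats of $\attspace$.

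Next I would handle $k$-flats. Fix a $k$-dimensional $U\subseteq\F_q^n$ with generator matrix $\mat{U}_0\in\F_q^{k\times n}$, and let $\mathcal{U}=\{\mat{U}\in\F_q^{m\times n};\langle\mat{U}\rangle\subseteq U\}$. For $\mat{A}\in\F_q^{m\times n}$ the image set $\varphi(\mat{A}+\mathcal{U})=\{\langle(\imat_m\mid\mat{A}+\mat{U})\rangle;\mat{U}\in\mathcal{U}\}$ I claim is exactly the set of points of the $(m+k)$-subspace
\[
  W_{\mat{A}}=\bigl\langle\,(\imat_m\mid\mat{A})\,\bigr\rangle
  \;+\;\bigl\langle\,(\mat{0}\mid\mat{U}_0)\,\bigr\rangle
\]
that miss $S$, and that $W_{\mat{A}}\cap S$ is the $k$-dimensional space spanned by the rows of $(\mat{0}\mid\mat{U}_0)$, i.e.\ $U$ with coordinates shifted into the last $n$ positions. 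Indeed $W_{\mat{A}}$ has dimension $m+k$ because the $m$ rows $(\imat_m\mid\cdot)$ are independent of the $k$ rows $(\mat{0}\mid\mat{U}_0)$; its intersection with $S$ consists of the vectors with first $m$ coordinates zero, forcing the $\imat_m$-part to vanish, leaving exactly $\langle(\mat{0}\mid\mat{U}_0)\rangle$; and a point of $W_{\mat{A}}$ missing $S$ must use all $m$ of the first-block rows, so it has a generator of the form $(\imat_m\mid\mat{A})+\mat{Z}(\mat{0}\mid\mat{U}_0)=(\imat_m\mid\mat{A}+\mat{Z}\mat{U}_0)$ with $\mat{Z}\mat{U}_0$ ranging over $\mathcal{U}$ as $\mat{Z}$ ranges over $\F_q^{m\times k}$. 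This simultaneously shows $\varphi(\mat{A}+\mathcal{U})\subseteq W_{\mat{A}}$, that the $k$-flat is recovered as ``the affine part'' of $W_{\mat{A}}$, and that $W_{\mat{A}}$ depends only on the coset $\mat{A}+\mathcal{U}$ (replacing $\mat{A}$ by $\mat{A}+\mat{U}$, $\mat{U}\in\mathcal{U}$, changes nothing since $\langle(\mat{0}\mid\mat{U})\rangle\subseteq\langle(\mat{0}\mid\mat{U}_0)\rangle$). Conversely every $(m+k)$-subspace $W$ with $\dim(W\cap S)=k$ arises this way: write $W\cap S=\langle(\mat{0}\mid\mat{U}_0)\rangle$ and pick any point of $W\setminus S$ to get the $\imat_m$-block and hence $\mat{A}$. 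Thus $\mat{A}+\mathcal{U}\mapsto W_{\mat{A}}$ is a bijection from $k$-flats of $\LAG(m,n,q)$ onto $k$-flats of $\attspace$ extending $\varphi$, and the parallel-class and space-at-infinity statements are read off from $W_{\mat{A}}\cap S$.

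It remains to check that $\varphi$ is an \emph{isomorphism} of incidence geometries, i.e.\ that containment of flats is preserved in both directions. Given a $j$-flat $\mat{B}+\mathcal{V}$ and a $k$-flat $\mat{A}+\mathcal{U}$ with $j\le k$, one has $\mat{B}+\mathcal{V}\subseteq\mat{A}+\mathcal{U}$ iff $\mathcal{V}\subseteq\mathcal{U}$ (equivalently the infinity space $V_0\subseteq U_0$) and $\mat{B}-\mat{A}\in\mathcal{U}$; I would verify this is equivalent to $W_{\mat{B}}\subseteq W_{\mat{A}}$ using the explicit generator descriptions above — the inclusion of the $S$-parts gives $V_0\subseteq U_0$, and membership of the point $(\imat_m\mid\mat{B})$ in $W_{\mat{A}}$ gives $\mat{B}-\mat{A}\in\mathcal{U}$; conversely these two conditions plus the paragraph above force $W_{\mat{B}}\subseteq W_{\mat{A}}$. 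This is entirely routine linear algebra with block matrices; the only mildly delicate point — really the crux — is the bookkeeping that pins down $\mat{A}$ uniquely from the row space (injectivity of $\varphi$) and, dually, that every $W$ with the right intersection dimension is hit, since this is what makes $\varphi$ a geometry isomorphism rather than merely an embedding. Once that is in hand the remaining assertions (flat sizes, counts, point residues $\cong\PG(n-1,q)$) are immediate or already recorded in the text preceding the lemma.
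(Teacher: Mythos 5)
Your proposal is correct and follows essentially the same route as the paper's proof: both identify the $(m+k)$-subspaces $F$ with $\dim(F\cap S)=k$ by a block generator matrix $\left(\begin{smallmatrix}\imat_m&\mat{A}\\\mat{0}&\mat{U}_0\end{smallmatrix}\right)$ and compute that the points of $\attspace$ contained in $F$ are exactly $\langle(\imat_m\mid\mat{A}+\mat{Z}\mat{U}_0)\rangle$, i.e.\ the coset $\mat{A}+\mathcal{U}$, with the parallel class read off from $F\cap S$. Your additional explicit checks (injectivity via uniqueness of the reduced row echelon form, surjectivity onto such $F$, and incidence preservation between flats of different dimensions) are sound elaborations of steps the paper compresses into ``the result follows.''
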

In particular the points of $\LAG(m,n,q)$ correspond to the
($m-1$)-flats of $\PG(m+n-1,q)$ disjoint from the special ($n-1$)-flat
$S$, and the lines of $\LAG(m,n,q)$ correspond to the $m$-flats of
$\PG(m+n-1,q)$ meeting $S$ in a point. The lemma seems to be
well-known (cf.\ \cite[Ex.~1.5]{bonoli-melone03},
\cite[2.2.7]{clerck-maldeghem95}), but we include a proof for
completeness.
\begin{proof}
  For an $(m+k)$-dimensional subspace $F$ of $\F_q^{m+n}$ the condition
  $\dim(F\cap S)=k$ is equivalent to $\cm(F)=\left(
    \begin{smallmatrix}
      \imat_m&\mat{A}\\
      \mat{0}&\mat{Z}
    \end{smallmatrix}
\right)$ for some $\mat{A}\in\F_q^{m\times n}$ and some canonical
$\mat{Z}\in\F_q^{k\times n}$. The $m$-dimensional subspaces
$E\subseteq F$ with $E\cap S=\{\vek{0}\}$ are precisely those with
$\cm(E)=(\imat_m|\mat{A}+\mat{VZ})$ for some $\mat{V}\in\F_q^{m\times
  k}$. Clearly $\{\mat{VZ};\mat{V}\in\F_q^{m\times
  k}\}=\{\mat{U}\in\F_q^{m\times
  n};\langle\mat{U}\rangle\subseteq\langle\mat{Z}\rangle\}=\mathcal{U}$,
say, is a $k$-flat in $\LAG(m,n,q)$ (containing
$\mat{0}\in\F_q^{m\times n}$). This
shows that $F$ contains precisely those $E$ which correspond
to points on the $k$-flat $\mat{A}+\mathcal{U}$. The induced map on
$k$-flats is clearly bijective, and maps the parallel class of
$\mat{A}+\mathcal{U}$ to the subspaces $F$ intersecting $S$ in
$\langle\mat{Z}\rangle=\sum_{\mat{U}\in\mathcal{U}}\langle\mat{U}\rangle$. The
result follows.\footnote{Since
  $\mat{A}$ is required to have $k$ zero columns (``above the pivots of
  $\mat{Z}$''), the number of different choices for $\mat{A}$ is
  indeed $q^{m(n-k)}$.}
\end{proof}

\subsection{Maximum Rank Distance Codes}\label{ssec:mrd}
The set $\F_q^{m\times n}$ of $m\times n$ matrices over $\F_q$ forms a
metric space with respect to the \emph{rank distance} defined by
$\rdist(\mat{A},\mat{B})=\rank(\mat{A}-\mat{B})$. As shown in
\cite{delsarte78a,gabidulin85,roth91}, the maximum size of a code of
minimum distance $d$, $1\leq d\leq\min\{m,n\}$, in $(\F_q^{m\times
  n},\rdist)$ is $q^{n(m-d+1)}$ for $m\leq n$ and $q^{m(n-d+1)}$ for
$m\geq n$. A code $\mathcal{A}\subseteq\F_q^{m\times n}$ meeting this
bound with equality is said to be a $q$-ary $(m,n,k)$ \emph{maximum
  rank distance (MRD) code}, where $k=m-d+1$ for $m\leq n$ and
$k=n-d+1$ for $m\geq n$ (i.e.\ $\#\mathcal{A}=q^{nk}$ resp.\
$\#\mathcal{A}=q^{mk}$).\footnote{MRD codes form in some sense the
  $q$-analogue of maximum distance separable (MDS) codes. For more on
  this analogy see \cite{delsarte78a,yt:JSCC7} and Lemma~\ref{lma:mrd}
  below.}

>From now on we will always assume
that $m\leq n$.\footnote{The case $m\geq n$ readily reduces to $m\leq
  n$ by transposing $\mat{A}\to\mat{A}^\tp$, which maps
  $(\F_q^{m\times n},\rdist)$ isometrically onto $(\F_q^{n\times
    m},\rdist)$.}  Examples of MRD codes are the \emph{Gabidulin
  codes}\footnote{Although this name is commonly used now, it should be
  noted that the examples found in \cite{delsarte78a,roth91} are
  essentially the same.}, which can be defined as follows: Consider
the $\F_q$-space $V=\End(\F_{q^n}/\F_q)$ of all $\F_q$-linear
endomorphisms of the extension field $\F_{q^n}$. Then $V$ is also a
vector space over $\F_{q^n}$ (of dimension $n$), and elements of $V$
are uniquely represented as linearized polynomials (``polynomial
functions'') $x\mapsto
a_0x+a_1x^q+a_2x^{q^2}+\dots+a_{n-1}x^{q^{n-1}}$ with coefficients
$a_i\in\F_{q^n}$ and $q$-degree $<n$. The $(n,n,k)$ Gabidulin code
$\mathcal{G}$ consists of all such polynomials of $q$-degree $<k$. The
usual matrix representation of $\mathcal{G}$ is obtained by choosing
coordinates with respect to a fixed basis of $\F_{q^n}/\F_q$. This
gives rise to an isomorphism $(V,\rdist)\cong(\F_q^{n\times
  n},\rdist)$ of metric spaces, showing that the choice of basis does
not matter and the coordinate-free representation introduced above is
equivalent to the matrix representation. Rectangular $(m,n,k)$
Gabidulin codes (where $m<n$) are then obtained by restricting the
linear maps in $\mathcal{G}$ to an $m$-dimensional $\F_q$-subspace
$W$ of $\F_{q^n}$.

\begin{example}
  \label{ex:332q}
  Our main interest is in the case $m=n=3$, $d=2$. Here
  $\mathcal{G}=\{a_0x+a_1x^q;a_0,a_1\in\F_{q^3}\}$. The $2(q^3-1)$
  monomials $ax$ and $ax^q$, $a\in\F_{q^3}^\times$, have rank
  $3$. The remaining nonzero elements of $\mathcal{G}$ are of the form
  $a(x^q-bx)$ with $a,b\in\F_{q^3}^\times$ and have
  \begin{equation*}
    \rank\bigl(a(x^q-bx)\bigr)=
    \begin{cases}
      2&\text{if $b^{q^2+q+1}=1$},\\
      3&\text{if $b^{q^2+q+1}\neq 1$}.
    \end{cases}
  \end{equation*}
  This is easily seen by looking at the corresponding kernels:
  $x^q-bx=0$ has a nonzero solution iff $b=u^{q-1}$ for some
  $u\in\F_{q^3}^\times$, in which case $\kernel(x^q-bx)=\F_qu$. The
  rank distribution of $\mathcal{G}$ is shown in Table~\ref{tbl:rankdist}.
  \begin{table}[ht]
    \centering
    \(\begin{array}{c||c|c|c|c}
      \text{rank}&0&1&2&3\\\hline
      \#\text{codewords}&1&0&(q^3-1)(q^2+q+1)&(q^3-1)(q^3-q^2-q)
    \end{array}\)
    \caption{Rank distribution of the $q$-ary $(3,3,2)$ Gabidulin code}
    \label{tbl:rankdist}
  \end{table}
\end{example}

Now we return to the case of arbitrary MRD codes and state one of
their fundamental properties.\footnote{This property is analogous to
  the fact that for an $[n,k]$ MDS code every set of $k$ coordinates
  forms an information set. One might say that every $k$-dimensional
  subspace of $\F_q^m$ forms an ``information subspace'' for
  $\mathcal{A}$.}

\begin{lemma}[{cf.\ \cite[Lemma~2.4]{yt:JSCC7}}]
  \label{lma:mrd}
  Let $\mathcal{A}\subseteq\F_q^{m\times n}$ be an $(m,n,k)$ MRD code,
  where $m\leq n$, and $\mat{Z}\in\F_q^{k\times m}$ of full rank
  $k$. Then $\mathcal{A}\to\F_q^{k\times n}$, $\mat{A}\mapsto\mat{ZA}$
  is a bijection.\footnote{In particular, if $\mathcal{A}$ is linear (i.e.\ an
    $\F_q$-subspace of $\F_q^{m\times n}$) then
    $\mat{A}\mapsto\mat{ZA}$ is an isomorphism of
    $\F_q$-vector spaces.}
\end{lemma}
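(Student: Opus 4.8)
The plan is to exploit a one-line dimension count together with the defining minimum-distance property of $\mathcal{A}$, and then finish by comparing cardinalities. First I would note that the map $\mat{A}\mapsto\mat{Z}\mat{A}$ is trivially well defined, and that by the definition of an $(m,n,k)$ MRD code with $m\leq n$ we have $\#\mathcal{A}=q^{n(m-d+1)}=q^{nk}$, where $d=m-k+1$ is the minimum rank distance of $\mathcal{A}$. Since $\#\F_q^{k\times n}=q^{kn}$ as well, it suffices to prove that the map is injective.

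For injectivity I would argue as follows. Suppose $\mat{Z}\mat{A}=\mat{Z}\mat{A}'$ for some $\mat{A},\mat{A}'\in\mathcal{A}$, and put $\mat{D}=\mat{A}-\mat{A}'\in\F_q^{m\times n}$, so that $\mat{Z}\mat{D}=\mat{0}$. Then every column of $\mat{D}$ lies in the kernel of the linear map $\F_q^m\to\F_q^k$, $\vek{x}\mapsto\mat{Z}\vek{x}$; since $\mat{Z}$ has rank $k$, that kernel has dimension $m-k$, whence $\rank(\mat{D})\leq m-k=d-1$. If $\mat{A}\neq\mat{A}'$, then $\rdist(\mat{A},\mat{A}')=\rank(\mat{D})\geq d$ because $\mathcal{A}$ has minimum rank distance $d$, a contradiction. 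Hence $\mat{A}=\mat{A}'$, the map is injective, and therefore bijective by the cardinality equality above.

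I do not expect a real obstacle here; the only point that deserves a word of care is that $\mathcal{A}$ is not assumed to be linear, so the difference $\mat{D}=\mat{A}-\mat{A}'$ need in general not lie in $\mathcal{A}$. This is harmless, since the argument uses only the rank bound on $\mat{D}$ forced by $\mat{Z}\mat{D}=\mat{0}$ together with the fact that any two \emph{distinct} codewords of $\mathcal{A}$ are at rank distance at least $d$; linearity is never needed. (When $\mathcal{A}$ happens to be an $\F_q$-subspace of $\F_q^{m\times n}$, the very same computation shows that $\mat{A}\mapsto\mat{Z}\mat{A}$ is an isomorphism of $\F_q$-vector spaces, matching the remark in the footnote.)
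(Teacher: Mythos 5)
Your proof is correct and is exactly the argument the authors have in mind when they call this an ``easily proved lemma'' (the paper itself gives no proof, deferring to the cited reference): equality of cardinalities $\#\mathcal{A}=q^{nk}=\#\F_q^{k\times n}$ reduces the claim to injectivity, and $\mat{Z}\mat{D}=\mat{0}$ with $\rank(\mat{Z})=k$ forces $\rank(\mat{D})\leq m-k=d-1<d$, contradicting the minimum rank distance for distinct codewords. Your side remark that linearity of $\mathcal{A}$ is not needed (since $\rdist$ is defined via the difference of arbitrary matrices) is also correct and worth having made explicit.
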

The matrix-free version of this easily proved lemma says that
restriction to an arbitrary $k$-dimensional $\F_q$-subspace
$U\subseteq W\subseteq\F_{q^n}$ maps an $(m,n,k)$ MRD code
$\mathcal{A}\subseteq\Hom_{\F_q}(W,\F_{q^n})$ isomorphically onto
$\Hom_{\F_q}(U,\F_{q^n})$.

\subsection{Lifted Maximum Rank Distance Codes}\label{ssec:lmrd}
By a \emph{lifted maximum rank distance (LMRD) code} we mean a
subspace code obtained from an MRD code
$\mathcal{A}\subseteq\F_q^{m\times n}$ by the so-called \emph{lifting
  construction} of \cite{silva-kschischang-koetter08}, which assigns
to every matrix $\mat{A}\in\F_q^{m\times n}$ the subspace
$U=\langle(\imat_m|\mat{A})\rangle$ of $\F_q^{m\times(m+n)}$. The map
$\mat{A}\mapsto\langle(\imat_m|\mat{A})\rangle$ defines an isometry with
scale factor $2$ from $\bigl(\F_q^{m\times n},\rdist\bigr)$ into
$\bigl(\gaussm{V}{m},\sdist\bigr)$, $V=\F_q^{m+n}$; cf.\
\cite[Prop.~4]{silva-kschischang-koetter08}.
  
Rewriting everything in terms
of the subspace code parameters $v$, $d$, and $k$, a $q$-ary
$(v,M,d;k)$ constant-dimension subspace code $\mathcal{L}$ is an LMRD
code if $\mathcal{L}=\bigl\{\langle(\imat_k|\mat{A})\rangle;
\mat{A}\in\mathcal{A}\bigr\}$ for some MRD code
$\mathcal{A}\subseteq\F_q^{k\times(v-k)}$ of minimum distance
$d/2$. If $v\geq 2k$ (the case of interest to us) then the MRD code
parameters of $\mathcal{A}$ are $(m,n,k')
=(k,v-k,k-d/2+1)$, and
$M=\#\mathcal{L}=\#\mathcal{A}=q^{(v-k)(k-d/2+1)}$.

As for ordinary MRD codes, it is sometimes more convenient to use a
coordinate-free representation of LMRD codes, obtained as follows:
Suppose $V$, $W$ are vector spaces over $\F_q$ with $\dim(V)=n$,
$\dim(W)=m$ and $\mathcal{A}\subseteq\Hom(W,V)$ is an $(m,n,k)$ MRD
code. Let $\mathcal{L}$ be the subspace code with ambient space
$W\times V$ and members $G(f)=\left\{(x,f(x));x\in W\right\}$ for all
  $f\in\mathcal{A}$ (``graphs'' of the functions in
  $\mathcal{A}$). Since each $f\in\mathcal{A}$ is $\F_q$-linear, it is
  obvious that $\mathcal{L}$ consists of $m$-dimensional subspaces of
  $W\times V$. Moreover, choosing bases of $V$ and $W$ and representing
  linear maps $f\colon W\to V$ by $m\times n$ matrices $\mat{A}$ (acting on row
  vectors) with respect to these bases, induces an $\F_q$-isomorphism
  $V\times W\to\F_q^{m+n}$ sending $G(f)$ to
  $\langle(\imat_m|\mat{A})\rangle$. The induced isometry
  $\subspaces(V\times W)\to\subspaces(\F_q^{m+n})$ maps
  $\mathcal{L}$ to an LMRD code with ambient space $\F_q^{m+n}$,
  showing that both views are equivalent.

\begin{example}
  \label{ex:332qcont}
  The $q$-ary $(3,3,2)$ Gabidulin code $\mathcal{G}$ of
  Example~\ref{ex:332q} lifts to a $(6,q^6,4;3)_q$ constant-dimension
  subspace code $\mathcal{L}$. In the coordinate-free representation,
  the members of $\mathcal{L}$ are the subspaces
  $G(a_0,a_1)=\left\{(x,a_0x+a_1x^q);x\in\F_{q^3}\right\}
  \in\subspaces(\F_{q^3}\times\F_{q^3})$, where $a_0,a_1\in\F_{q^3}$.
    The ambient space $\F_{q^3}\times\F_{q^3}$ is considered as a
    vector space over $\F_q$.

  A (coordinate-dependent) representation of
  $\mathcal{L}$ by $3\times 6$ matrices over $\F_q$ is obtained by
  choosing a basis $(b_1,b_2,b_3)$ of $\F_{q^3}/\F_q$ and writing
  $a_0x+a_1x^q\in\mathcal{G}$ with respect to this basis. For example,
  if $q=2$ then we can choose the basis $(\beta,\beta^2,\beta^4)$, where
  $\beta^3+\beta^2+1=0$ (the unique normal basis of $\F_8/\F_2$). Then
  \begin{equation*}
    \beta\leftrightarrow\mat{B}=
    \begin{pmatrix}
      0&1&0\\
      1&0&1\\
      0&1&1
    \end{pmatrix},\qquad
    x^2\leftrightarrow\mat{\Phi}=
    \begin{pmatrix}
      0&1&0\\
      0&0&1\\
      1&0&0
    \end{pmatrix},
  \end{equation*}
  and $\mathcal{L}$ consists of the $64$ subspaces
  $\langle(\imat_3|\mat{A})\rangle$ of $\F_2^6$ corresponding to
  $\mat{A}=\mat{0}$, $\mat{B}^i$, $\mat{B}^i\mat{\Phi}$,
  $\mat{B}^i+\mat{B}^j\mat{\Phi}$ with $0\leq i,j\leq 6$.
\end{example}
Viewed as point sets in $\PG(v-1,q)$, the members of a $q$-ary
$(v,M,d;k)$ LMRD code are disjoint from the special ($v-k-1$)-flat
$S=\langle\uvek_{k+1},\uvek_{k+2},\dots,\uvek_v\rangle$. Delving
further into this, one finds that under the hypothesis $v\geq 2k$
LMRD codes with $d=2k$ partition the
points of $\PG(v-1,q)$ outside $S$, and LMRD codes with $d<2k$ form
higher-dimensional analogs of such partitions:

\begin{lemma}
  \label{lma:lmrd}
  Let $\mathcal{L}$ be a $q$-ary $(v,M,d;k)$ LMRD code with $v\geq 2k$
  and set $t=k-d/2+1$ (so that $\mathcal{L}$ arises from a $q$-ary
  $(k,v-k,t)$ MRD code $\mathcal{A}$ by the lifting
  construction). Then the members of $\mathcal{L}$ cover every
  ($t-1$)-flat in $\PG(v-1,q)$ disjoint from the special flat $S$
  exactly once.
\end{lemma}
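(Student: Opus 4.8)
The plan is to reduce the statement to Lemma~\ref{lma:mrd} via the matrix description of $\mathcal{L}$. Write $\mathcal{L}=\bigl\{\langle(\imat_k|\mat{A})\rangle;\mat{A}\in\mathcal{A}\bigr\}$, where $\mathcal{A}\subseteq\F_q^{k\times(v-k)}$ is a $(k,v-k,t)$ MRD code; the hypothesis $v\ge 2k$ enters here, guaranteeing $k\le v-k$, the orientation under which Lemma~\ref{lma:mrd} is stated. A $(t-1)$-flat of $\PG(v-1,q)$ is a $t$-dimensional subspace $T\subseteq\F_q^v$, and the first step is the observation that, with $S=\langle\uvek_{k+1},\dots,\uvek_v\rangle$, the condition $T\cap S=\{\vek{0}\}$ is equivalent to the projection onto the first $k$ coordinates being injective on $T$; equivalently, $T$ has a generator matrix of the shape $(\mat{Z}|\mat{Y})$ with $\mat{Z}\in\F_q^{t\times k}$ of full row rank $t$ and $\mat{Y}\in\F_q^{t\times(v-k)}$ arbitrary. (This may also be read off from Lemma~\ref{lma:lag}, which describes the flats of $\attspace$ disjoint from $S$.)

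The second step is to work out the incidence condition. Since the rows of $(\mat{Z}|\mat{Y})$ span $T$, we have $T\subseteq\langle(\imat_k|\mat{A})\rangle$ if and only if each such row is a linear combination of the rows of $(\imat_k|\mat{A})$; inspecting the first $k$ coordinates forces the combination producing the $i$-th row of $(\mat{Z}|\mat{Y})$ to be the $i$-th row of $\mat{Z}$, so the whole system collapses to the single matrix equation $\mat{Z}\mat{A}=\mat{Y}$. Now Lemma~\ref{lma:mrd}, applied to the $(k,v-k,t)$ MRD code $\mathcal{A}$ and the full-rank matrix $\mat{Z}\in\F_q^{t\times k}$, asserts that $\mat{A}\mapsto\mat{Z}\mat{A}$ is a bijection from $\mathcal{A}$ onto $\F_q^{t\times(v-k)}$. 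Hence exactly one $\mat{A}\in\mathcal{A}$ satisfies $\mat{Z}\mat{A}=\mat{Y}$, i.e.\ exactly one member of $\mathcal{L}$ contains $T$, which is the claim.

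I do not anticipate a genuine obstacle, as the argument is essentially bookkeeping; the points that need care are matching the MRD parameters $(m,n,k)$ of Lemma~\ref{lma:mrd} to those of $\mathcal{A}$ correctly, and making the role of $v\ge 2k$ explicit (without it $\mathcal{A}$ has the ``wrong shape'' and Lemma~\ref{lma:mrd} would have to be invoked after transposing $\mathcal{A}$ and $\mat{Z}$). As an alternative, the whole proof can be carried out in the coordinate-free picture of lifted MRD codes introduced above: a $(t-1)$-flat disjoint from $S=\{0\}\times V$ is the graph of a unique $\F_q$-linear map $g\colon U\to V$ defined on a unique $t$-dimensional subspace $U\subseteq W$; it lies in $G(f)$ precisely when $f|_U=g$; and the matrix-free form of Lemma~\ref{lma:mrd} — restriction to a $t$-dimensional ``information subspace'' is a bijection $\mathcal{A}\to\Hom(U,V)$ — yields the unique such $f$, hence the unique member of $\mathcal{L}$ through the flat.
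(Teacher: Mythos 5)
Your proof is correct and follows essentially the same route as the paper's: both reduce the incidence condition for a $(t-1)$-flat disjoint from $S$, written as $\langle(\mat{Z}|\mat{Y})\rangle$ with $\mat{Z}\in\F_q^{t\times k}$ of full rank, to the single matrix equation $\mat{Z}\mat{A}=\mat{Y}$ and then invoke Lemma~\ref{lma:mrd}. The only cosmetic difference is that the paper settles the ``at most once'' half by the minimum-distance argument $\sdist(\mathcal{L})\leq 2k-2t=d-2$ rather than by the injectivity of $\mat{A}\mapsto\mat{Z}\mat{A}$; both are immediate consequences of the MRD property.
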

\begin{proof}
  If a ($t-1$)-flat in $\PG(v-1,q)$ were covered twice, we would
  have $\sdist(\mathcal{L})\leq 2k-2t=d-2$, a
  contradiction. Now suppose $F$ is a ($t-1$)-flat in $\PG(v-1,q)$
  disjoint from $S$. Since the pivot columns of $\cm(F)$ are within
  $\{1,\dots,k\}$, we must have $\cm(F)=(\mat{Z}|\mat{B})$ for some
  canonical matrix $\mat{Z}\in\F_q^{t\times k}$ and some matrix
  $\mat{B}\in\F_q^{t\times(v-k)}$. By Lemma~\ref{lma:mrd},
  $\mat{B}=\mat{ZA}$ for some $\mat{A}\in\mathcal{A}$. Hence
  $\cm(F)=(\mat{Z}|\mat{ZA})=\mat{Z}(\imat_k|\mat{A})$, so that $F$
  is covered by
  the codeword $\langle(\imat_k|\mat{A})\rangle\in\mathcal{L}$.
\end{proof}

Returning to Example~\ref{ex:332qcont}, the lemma says that the $q^6$
planes in $\PG(5,q)$ obtained by lifting the $q$-ary $(3,3,2)$
Gabidulin code cover each of the $q^6(q^2+q+1)$ lines in $\PG(5,q)$
disjoint from the special plane
$S=\langle\uvek_4,\uvek_5,\uvek_6\rangle$ exactly once (and thus in
particular the number of lines in $\PG(5,q)$ disjoint from $S$ equals
$q^6(q^2+q+1)$). As an immediate consequence of this we have that the
$(6,q^6,4;3)_q$ LMRD code $\mathcal{L}$ formed by these planes cannot
be enlarged, without decreasing the minimum distance, by adding a new
plane $E$ with $\dim(E\cap S)\leq 1$. Indeed, such a plane would
contain a line disjoint from $S$, and hence already covered by
$\mathcal{L}$, producing a codeword $U\in\mathcal{L}$ with
$\sdist(E,U)=2$. Since lines contained in $S$ are covered by at most
one codeword of $\mathcal{C}$, we obtain for any $(6,M,4;3)_q$
subspace code $\mathcal{C}$ containing $\mathcal{L}$ the upper bound
$M\leq q^6+q^2+q+1$ (a special case of
\cite[Th.~11]{etzion-silberstein13}). This shows already that optimal
$(6,M,4;3)_2$ subspace codes, which have $M>71$, cannot contain an
LMRD subcode.
  
\section{The Computer Classification}\label{sec:comp}
For the classification of optimal $(6,M,4;3)_2$ subspace codes,
we are facing a huge search space, making a direct attack by a
depth-first search or an integer linear program infeasible.  We will
bring this under control by using certain substructures of large
$(6,M,4;3)_2$ subspace codes as intermediate classification steps.
Furthermore, we make use of the group $\GL(6,2)$ of order
$20158709760$, which is acting on the search space.  The canonization
algorithm in~\cite{feulner13} (based on~\cite{feulner09}, see also~\cite{feulner-diss}) will be used
to reject isomorphic copies at critical points during the search,
keeping us from generating the ``same'' object too many times.  The
same method allows us to compute automorphism groups of network codes,
and to filter the eventual list of results for isomorphic copies.

Given a $(6,M,4;3)_2$
constant-dimension code $\mathcal{C}$ with ambient space $V = \F_2^6$, 
we define the \emph{degree} of a
point $P\in\gaussm{V}{1}$ as 
\[
    r(P) = \#\{E\in\mathcal{C} \mid P \subset E\}\text{.}
\]
By the discussion in Section~\ref{ssec:bound}, $r(P) \leq 9$ for all $P$.

\subsection{Classification of $9$-configurations}
A subset of $\mathcal{C}$ consisting of $9$ planes passing through a
common point $P$ will be called a \emph{$9$-configuration}.
Obviously, points of degree $9$ in $\PG(V)$ correspond to
$9$-configurations in $\mathcal{C}$.

\begin{lemma}
\label{lma:9conf}
If $\#\mathcal{C} \geq 73$ then $\mathcal{C}$ contains a $9$-configuration.
\end{lemma}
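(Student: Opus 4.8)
The plan is to argue by a double-counting of incidences between points and planes of $\mathcal{C}$, combined with the degree bound $r(P)\leq 9$ from Section~\ref{ssec:bound}. Suppose for contradiction that $\mathcal{C}$ has size $M=\#\mathcal{C}\geq 73$ but contains no $9$-configuration, i.e.\ $r(P)\leq 8$ for every point $P\in\gaussm{V}{1}$. Summing $r(P)$ over all $63$ points of $\PG(5,2)$ counts the pairs $(P,E)$ with $P\subset E\in\mathcal{C}$; since each plane contains exactly $7$ points, this sum equals $7M$. Hence $7M=\sum_P r(P)\leq 8\cdot 63=504$, giving $M\leq 72$, contradicting $M\geq 73$.

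The subtlety is that $504/7=72$ is exactly one short, so the argument as stated already closes the gap; the real content is verifying that the naive bound $r(P)\leq 8$ (in the absence of a $9$-configuration) is the only input needed, and that no point can be "wasted". I would double-check the arithmetic and, if the paper wants a cleaner statement, note that in fact $7M\leq 8\cdot 63$ forces $M\leq 72$ with equality only if \emph{every} point has degree exactly $8$ — a configuration whose existence or non-existence is irrelevant here since we only need the bound. So the lemma follows immediately once one records that $\sum_{P}r(P)=7\#\mathcal{C}$ and invokes $r(P)\le 9$ with the strict inequality $r(P)\leq 8$ coming from the assumed absence of a $9$-configuration.

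The one place where care is genuinely needed is the justification of $r(P)\leq 9$ itself, but this is supplied by the discussion in Section~\ref{ssec:bound}: the planes of $\mathcal{C}$ through a fixed point $P$ induce, in the quotient $\PG(V)/P\cong\PG(4,2)$, a partial line-spread, whose size is at most $\smax_2(5,4;2)=2^3+1=9$ by \eqref{eq:beutel}. So I would phrase the proof as: (i) recall $r(P)\leq 9$ for all $P$; (ii) observe $\sum_{P\in\gaussm{V}{1}}r(P)=7\#\mathcal{C}$ by counting incident point–plane pairs; (iii) if no $9$-configuration exists then $r(P)\leq 8$ for all $P$, whence $7\#\mathcal{C}\leq 8\cdot 63=504$ and $\#\mathcal{C}\leq 72<73$, a contradiction.

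I do not anticipate any real obstacle here — the lemma is an elementary pigeonhole/counting statement, and the only "imported" fact is the partial-spread bound already established in Lemma~\ref{lma:bound} and equation~\eqref{eq:beutel}. The main thing to get right is simply the bookkeeping: $63$ points, $7$ points per plane, and the off-by-one that makes $M=73$ incompatible with a uniform degree bound of $8$.
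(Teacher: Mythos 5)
Your proposal is correct and is essentially identical to the paper's proof: assume $r(P)\leq 8$ for all points, double count incident point--plane pairs to get $7\,\#\mathcal{C}\leq 8\cdot 63=504$, and conclude $\#\mathcal{C}\leq 72$, contradicting $\#\mathcal{C}\geq 73$. The justification of $r(P)\leq 9$ via the partial line-spread bound in $\PG(4,2)$ is likewise the one the paper refers to in Section~\ref{ssec:bound}.
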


\begin{proof}
  Assuming $r(P) \leq 8$ for all points $P$ and double counting the
  pairs $(P,E)$ with points $P\in\gaussm{V}{1}$ and codewords
  $E\in\mathcal{C}$ passing through $P$ yields $7\cdot\#\mathcal{C}
  \leq 8\cdot 63$ and hence the contradiction $\#\mathcal{C} \leq 72$.
\end{proof}

Lemma~\ref{lma:9conf} implies that for the classification of optimal
$(6,M,4;3)_2$ codes $\mathcal{C}$, we may assume the
existence of a $9$-configuration in $\mathcal{C}$.
By Section~\ref{ssec:bound}, the derived code $\mathcal{C}_P$ in any
point $P$ of degree $9$ is one of the four isomorphism types of
partial spreads in $\PG(V/P)\cong\PG(4,2)$
and will accordingly be denoted by X, E, I$\Delta$ or I$\Delta'$.

The above discussion shows that for our classification goal, we may
start with the four different $9$-configurations and enumerate all
extensions to a $(6,M,4;3)_2$ code with $M\geq 77$.
However, the search space is still too large to make this approach feasible.
Consequently, another intermediate classification goal is needed.

\subsection{Classification of $17$-configurations}
A subset of $\mathcal{C}$ of size $17$ will be called a
\emph{$17$-configuration} if it is the union of two
$9$-con\-fi\-gu\-ra\-tions.  A $17$-configuration corresponds to a pair of
points $(P,P')$ of degree $9$ that is connected by a codeword in
$\mathcal{C}$.  The next lemma shows that large $(6,M,4;3)_2$ codes
are necessarily extensions of a $17$-configuration.

\begin{lemma}
\label{lma:17conf}
If $\#\mathcal{C} \geq 74$ then $\mathcal{C}$ contains a $17$-configuration.
\end{lemma}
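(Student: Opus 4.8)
The plan is to argue by contradiction in the same spirit as Lemma~\ref{lma:9conf}, but one level up: assume $\#\mathcal{C}\geq 74$ and that $\mathcal{C}$ contains \emph{no} $17$-configuration, and derive a numerical contradiction by double counting. By Lemma~\ref{lma:9conf}, $\mathcal{C}$ does contain at least one $9$-configuration, so there is at least one point of degree $9$. Let $\Pi=\{P\in\gaussm{V}{1};r(P)=9\}$ be the set of points of degree $9$. The assumption that no $17$-configuration exists means exactly that no codeword $E\in\mathcal{C}$ passes through two distinct points of $\Pi$; equivalently, every plane $E\in\mathcal{C}$ contains \emph{at most one} point of $\Pi$.

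First I would bound $\#\Pi$ from below. Double counting pairs $(P,E)$ with $P\in\gaussm{V}{1}$, $E\in\mathcal{C}$, $P\subset E$ gives $\sum_{P}r(P)=7\cdot\#\mathcal{C}\geq 7\cdot 74=518$. Splitting the sum over $\Pi$ (where $r(P)=9$) and its complement (where $r(P)\leq 8$, by Section~\ref{ssec:bound}), and using that there are $63$ points in total, we get $9\cdot\#\Pi + 8\cdot(63-\#\Pi)\geq 518$, i.e.\ $\#\Pi\geq 518-504=14$. So there are at least $14$ points of degree $9$. On the other hand, each of the $9$ planes through a fixed $P\in\Pi$ contains $7$ points, one of which is $P$; so these planes together cover $P$ plus at most $9\cdot 6=54$ further points, and in particular the ``neighbourhood'' counting will be constrained. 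The key exploitation of the no-$17$-configuration hypothesis is that the $9$ planes through a point $P\in\Pi$ can meet $\Pi$ only in $P$ itself.

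Now I would double count incidences between $\Pi$ and $\mathcal{C}$ in two ways, or better, count pairs $(P,E)$ with $P\in\Pi$ and $E\in\mathcal{C}$, $P\subset E$. On one hand this equals $9\cdot\#\Pi\geq 9\cdot 14=126$. On the other hand, since each $E\in\mathcal{C}$ contains at most one point of $\Pi$, this count is at most $\#\mathcal{C}$. Hence $\#\mathcal{C}\geq 9\cdot\#\Pi\geq 126$, contradicting Lemma~\ref{lma:bound6}, which gives $\#\mathcal{C}\leq\smax_2(6,4;3)\leq(2^3+1)^2=81$. (Already $\#\mathcal{C}\geq 126$ is absurd for a $(6,M,4;3)_2$ code, so the even cruder bound $M\leq 63\cdot 9/7=81$ suffices.) This contradiction shows $\mathcal{C}$ must contain a $17$-configuration.

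The only subtle point — and the one I would double-check carefully — is the translation ``no $17$-configuration $\iff$ every codeword contains at most one point of $\Pi$''. A $17$-configuration is the union of two $9$-configurations sharing a common codeword: if $E\in\mathcal{C}$ passes through two points $P\neq P'$ of degree $9$, then the $9$ planes through $P$ together with the $9$ planes through $P'$ overlap in exactly the planes through the line $\langle P,P'\rangle$; since $\mathcal{C}$ has minimum distance $4$, a line lies in at most one codeword, so the overlap is precisely $\{E\}$, giving $9+9-1=17$ planes — a genuine $17$-configuration. Conversely a $17$-configuration visibly produces such a codeword $E$ and pair $P,P'$. So the equivalence is clean, and the estimate $\#\Pi\geq 14$ together with the ``one point of $\Pi$ per codeword'' constraint closes the argument with a comfortable margin; no delicate case analysis on the spread types X, E, I$\Delta$, I$\Delta'$ is needed here.
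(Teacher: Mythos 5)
Your proof is correct, and it takes a genuinely different route from the paper's, although both are double counts of point--codeword incidences built on the same translation of the hypothesis (no $17$-configuration means no codeword joins two points of degree $9$). The paper argues \emph{locally}: it fixes a single degree-$9$ point $P$, notes that its $9$-configuration covers $54$ further points which must then all have degree $\leq 8$, and concludes $7\cdot\#\mathcal{C}\leq 54\cdot 8+9\cdot 9=513<7\cdot 74$. You argue \emph{globally}: you first lower-bound the whole set $\Pi$ of degree-$9$ points by $\#\Pi\geq 7\cdot\#\mathcal{C}-504\geq 14$, then count incidences between $\Pi$ and $\mathcal{C}$ to get $9\cdot\#\Pi\leq\#\mathcal{C}$, and finish against the upper bound $81$ of Lemma~\ref{lma:bound6}. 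Your justification of the key equivalence (in particular that the two $9$-configurations through connected degree-$9$ points $P\neq P'$ overlap in exactly the one codeword containing the line $\langle P,P'\rangle$, so the union really has $17$ elements) is sound, and your preliminary estimate $\#\Pi\geq 14$ is a correct consequence of $r(P)\leq 8$ off $\Pi$. Neither argument is stronger than the other: if you substitute $\#\Pi\geq 7\cdot\#\mathcal{C}-504$ directly into $9\cdot\#\Pi\leq\#\mathcal{C}$ you get $62\cdot\#\mathcal{C}\leq 4536$, i.e.\ $\#\mathcal{C}\leq 73$, exactly the paper's threshold, and then the appeal to Lemma~\ref{lma:bound6} becomes unnecessary. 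What your version buys is a reusable intermediate fact (a quantitative lower bound on the number of degree-$9$ points of any large code) and a very comfortable final margin; what the paper's version buys is brevity, needing only one degree-$9$ point rather than fourteen.
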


\begin{proof}
  By Lemma~\ref{lma:9conf} there is a point $P$ of degree $9$.  
  The $9$-configuration through $P$ covers $9\cdot 6 = 54$ points $P'\neq
  P$, since two of its planes cannot have more than a single point in
  common. Under the assumption that there is no $17$-configuration, we
  get that those $54$ points are of degree $\leq 8$.  Double counting
  the set of pairs $(Q,E)$ of points $Q$ and codewords $E$ passing
  through $Q$ shows that
\[
7\cdot\#\mathcal{C} \leq 54\cdot 8 + (63 - 54)\cdot 9
\]
and hence $\#\mathcal{C} \leq \frac{513}{7} < 74$, a
contradiction.
\end{proof}

For the classification of $17$-configurations, we start with the four
isomorphism types of $9$-configurations.  In the following,
$\mathcal{N}$ denotes a $9$-configuration, $P=\bigcap\mathcal{N}$ the
intersection point of its $9$ planes and
$M=\bigcup\mathcal{N}\setminus \{P\}$ the set of $54$ points distinct from
$P$ covered by a block in $\mathcal{N}$.
Up to isomorphism, the possible choices for the second intersection point $P'$ are
given by the orbits of the automorphism group of $\mathcal{N}$ on $M$.
This orbit structure is shown in Table~\ref{tbl:orbN}.  For example,
for type E the $48$ points in $M$ fall into $4$ orbits of length $12$
and a single orbit or length $6$, so up to isomorphism, there are $5$
ways to select the point $P'$.  For each Type
$T\in\{\text{X},\text{E},\text{I}\Delta,\text{I}\Delta'\}$, the
different types for the choice of $P'$ will be denoted by $T_1,
T_2,\ldots$, enumerating those coming from larger orbits first, see
Table~\ref{tbl:ext9conf}.

\begin{table}[htbp]
  \centering
  \(
  \begin{array}{ccc}
  	\text{$\mathcal{N}$} & \#\Aut(\mathcal{N}) & \text{orbit structure on $M$} \\
	\hline
	\text{X}        & 48 & 48^1 6^1 \\
	\text{E}        & 12 & 12^4 6^1 \\
	\text{I}\Delta  & 12 & 12^2 6^5 \\
	\text{I}\Delta' & 12 & 12^2 6^5
  \end{array}
  \)
  \caption{Orbit structure on the points covered by a $9$-configuration $\mathcal{N}$} 
  \label{tbl:orbN}
\end{table}

These $2 + 5 + 7 + 7 = 21$ different \emph{dotted $9$-structures} 
($9$-structures together with the selected point $P'$) are
fed into a depth-first search enumerating all extensions to a
$17$-configuration by adding blocks through $P'$.  For each case, the
number of extensions is shown in Table~\ref{tbl:ext9conf}.  Filtering
out isomorphic copies among the resulting $575264$
$17$-configurations, we arrive at

\begin{lemma}
There are $12770$ isomorphism types of $17$-configurations.
\end{lemma}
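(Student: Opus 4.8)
The plan is to carry out the computation announced in the preceding paragraphs and extract the isomorphism count from its output. The $21$ dotted $9$-structures have already been enumerated using the orbit data of Table~\ref{tbl:orbN}; for each of them, a depth-first search adds, one at a time, the codewords of a hypothetical $(6,M,4;3)_2$ code $\mathcal{C}$ that pass through the second distinguished point $P'$, subject to the constraint that any two planes meet in at most a point. Each branch terminates when no further plane through $P'$ can be added without violating the distance condition, i.e.\ when a full $9$-configuration through $P'$ has been built; the leaves of the search tree are exactly the $17$-configurations containing the given dotted $9$-structure. Summing the leaf counts over all $21$ inputs (the per-case figures being recorded in Table~\ref{tbl:ext9conf}) gives the $575264$ $17$-configurations mentioned in the text.

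The second and decisive step is isomorph rejection: two $17$-configurations are identified if some element of $\GL(6,2)$ maps one to the other. Here I would invoke the canonization machinery of~\cite{feulner13,feulner09,feulner-diss}, which assigns to each configuration (regarded as a set of $17$ planes in $\PG(5,2)$) a canonical form under the $\GL(6,2)$-action. Applying it to all $575264$ configurations and keeping one representative per canonical form collapses the list to $12770$ classes, which is the assertion of the lemma.

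The main obstacle is not conceptual but computational: although $\GL(6,2)$ has order $20158709760$, so that a naive orbit computation is hopeless, the canonization algorithm avoids explicit group enumeration and handles the roughly half a million configurations in reasonable time; correctness of the final count then rests on the correctness of that algorithm together with the completeness of the depth-first enumeration. I would also note the consistency check that the $12770$ classes, when their $\GL(6,2)$-orbit sizes are summed, must reproduce $575264$ (each $17$-configuration sitting in as many dotted-structure searches as it contains ordered pairs $(P,P')$ of degree-$9$ points joined by a codeword, so a little care with multiplicities is needed here, but this only affects the bookkeeping of Table~\ref{tbl:ext9conf}, not the isomorphism count itself).
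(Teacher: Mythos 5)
Your proposal follows essentially the same route as the paper: a depth-first search extending each of the $21$ dotted $9$-configurations by planes through $P'$ to produce the $575264$ $17$-configurations of Table~\ref{tbl:ext9conf}, followed by isomorph rejection under $\GL(6,2)$ via the canonization algorithm of~\cite{feulner13}, yielding the $12770$ classes. The only minor imprecision is identifying ``branch terminates'' with ``a full $9$-configuration through $P'$ has been built'' (dead-end branches with fewer than $9$ planes through $P'$ must simply be discarded), but this does not affect the argument.
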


\begin{table}[htbp]
  \centering
  \(
  \begin{array}{crr}
  	\text{dotted $9$-conf.} & \text{orbit size} & \#\text{extensions} \\
	\hline
\text{X}_1        & 48 & 28544 \\
\text{X}_2        &  6 & 23968 \\
\text{E}_1        & 12 & 28000 \\
\text{E}_2        & 12 & 28544 \\
\text{E}_3        & 12 & 28000 \\
\text{E}_4        & 12 & 27168 \\
\text{E}_5        &  6 & 25632 \\
\text{I}\Delta_1  & 12 & 28544 \\
\text{I}\Delta_2  & 12 & 28000 \\
\text{I}\Delta_3  &  6 & 27168 \\
\text{I}\Delta_4  &  6 & 27680 \\
\text{I}\Delta_5  &  6 & 27680 \\
\text{I}\Delta_6  &  6 & 25632 \\
\text{I}\Delta_7  &  6 & 28256 \\
\text{I}\Delta'_1 & 12 & 28544 \\
\text{I}\Delta'_2 & 12 & 28000 \\
\text{I}\Delta'_3 &  6 & 27168 \\
\text{I}\Delta'_4 &  6 & 27680 \\
\text{I}\Delta'_5 &  6 & 27680 \\
\text{I}\Delta'_6 &  6 & 25632 \\
\text{I}\Delta'_7 &  6 & 27744 \\
\hline
 & &                   575264
  \end{array}
  \)
  \caption{Extension of dotted $9$-configurations to $17$-configurations}
  \label{tbl:ext9conf}
\end{table}

Each $17$-configuration contains two dotted $9$-configurations.  The
exact distribution of the pairs of isomorphism types of dotted
$9$-con\-fi\-gu\-ra\-tions is shown in Table~\ref{tbl:pairs9in17}.  For
example, there are $68$ isomorphism types of $17$-configurations such
that both dotted $9$-configurations are of isomorphism type
$\text{X}_1$.

\begin{table}
	\centering
	\resizebox{\textwidth}{!}{
\(
\begin{array}{c|r*{20}{@{\hspace{3pt}}r}}
&\text{X}_1&\text{X}_2&\text{E}_1&\text{E}_2&\text{E}_3&\text{E}_4&\text{E}_5&\text{I}\Delta_1&\text{I}\Delta_2&\text{I}\Delta_3&\text{I}\Delta_4&\text{I}\Delta_5&\text{I}\Delta_6&\text{I}\Delta_7&\text{I}\Delta'_1&\text{I}\Delta'_2&\text{I}\Delta'_3&\text{I}\Delta'_4&\text{I}\Delta'_5&\text{I}\Delta'_6&\text{I}\Delta'_7\\
\hline
\text{X}_1&68&10&120&132&120&120&58&132&120&60&59&59&56&60&132&120&60&59&59&56&60\\
\text{X}_2&&3&16&10&16&12&8&10&16&7&8&8&6&10&10&16&7&8&8&6&10\\
\text{E}_1&&&64&120&124&114&54&120&124&57&60&60&56&60&120&124&57&60&60&56&64\\
\text{E}_2&&&&68&120&120&58&132&120&60&59&59&56&60&132&120&60&59&59&56&60\\
\text{E}_3&&&&&64&114&54&120&124&57&60&60&56&60&120&124&57&60&60&56&64\\
\text{E}_4&&&&&&62&54&120&114&60&58&58&58&60&120&114&60&58&58&58&48\\
\text{E}_5&&&&&&&14&58&54&27&32&32&20&32&58&54&27&32&32&20&28\\
\text{I}\Delta_1&&&&&&&&68&120&60&59&59&56&60&132&120&60&59&59&56&60\\
\text{I}\Delta_2&&&&&&&&&64&57&60&60&56&60&120&124&57&60&60&56&64\\
\text{I}\Delta_3&&&&&&&&&&17&29&29&29&31&60&57&31&29&29&29&24\\
\text{I}\Delta_4&&&&&&&&&&&17&32&31&32&59&60&29&32&32&31&28\\
\text{I}\Delta_5&&&&&&&&&&&&17&31&32&59&60&29&32&32&31&28\\
\text{I}\Delta_6&&&&&&&&&&&&&13&24&56&56&29&31&31&23&28\\
\text{I}\Delta_7&&&&&&&&&&&&&&23&60&60&31&32&32&24&23\\
\text{I}\Delta'_1&&&&&&&&&&&&&&&68&120&60&59&59&56&60\\
\text{I}\Delta'_2&&&&&&&&&&&&&&&&64&57&60&60&56&64\\
\text{I}\Delta'_3&&&&&&&&&&&&&&&&&17&29&29&29&24\\
\text{I}\Delta'_4&&&&&&&&&&&&&&&&&&17&32&31&28\\
\text{I}\Delta'_5&&&&&&&&&&&&&&&&&&&17&31&28\\
\text{I}\Delta'_6&&&&&&&&&&&&&&&&&&&&13&28\\
\text{I}\Delta'_7&&&&&&&&&&&&&&&&&&&&&27
\end{array}
\)
}
	\caption{Pairs of dotted $9$-configurations in $17$-configurations}
	\label{tbl:pairs9in17}
\end{table}

\subsection{Classification of $(6,M,4;3)_2$ codes with $M\geq 77$}
By Lemma~\ref{lma:17conf}, a $(6,M,4;3)_2$ code with
$M\geq 77$ is the extension of some $17$-configuration.  Given a
fixed $17$-configuration $\mathcal{S}$, we formulate the extension
problem as an integer linear program: For each plane
$E\in\gaussm{V}{3}$, a variable $x_E$ is introduced, which may take the
values $0$ and $1$.  The value $x_E = 1$ indicates $E\in
\mathcal{C}$.  Now $\sdist(\mathcal{C})\geq 4$ is equivalent to
a system of linear constraints for $x_E$:
\[
    \sum_{E\in\gaussm{V}{3} ; L\subset E} x_E \leq 1\qquad\text{for all lines }L\in\gaussm{V}{2}\text{.}
\]
The fact $r(P) \leq 9$ for all points $P\in\gaussm{V}{1}$ yields the further system of linear constraints
\[
	\sum_{E\in\gaussm{V}{3} ; P\subset E} x_E \leq 9\qquad\text{for all points }P\in\gaussm{V}{1}\text{.}
\]
As argued in the introduction, $\mathcal{C}$ is a $(v,M,d;k)_q$
constant-dimension code if and only if $\mathcal{C}^\perp = \{E^\perp \mid E\in
\mathcal{C}\}$ is a $(v,M,d;v-k)_q$ constant-dimension code.
In our case, if $\mathcal{C}$ is a $(6,M,4;3)_2$ code then so is
$\mathcal{C}^\perp$.
This allows us to add the dualized constraints
\begin{align*}
	\sum_{E\in\gaussm{V}{3} ; E\subset H} x_E & \leq 9 & & \text{for all hyperplanes }H\in\gaussm{V}{5}\text{ and} \\
	\sum_{E\in\gaussm{V}{3} ; E\subset S} x_E & \leq 1 & & \text{for all solids }S\in\gaussm{V}{4}\text{.}
\end{align*}
The starting configuration $\mathcal{S}$ is prescribed by adding the
constraints
\[
    x_E = 1\qquad\text{for all }E\in\mathcal{S}\text{.}
\]
Under these
restrictions, the goal is to maximize the objective function
\[
    \sum_{E\in\gaussm{V}{3}} x_E\text{.}
    \]
To avoid unnecessary computational
branches, we add another inequality forcing this value to be $\geq
77$.

For each of the $12770$ isomorphism types of $17$-configurations
$\mathcal{S}$, this integer linear problem was fed into the ILP-solver
CPLEX running on a standard PC.  The computation time per case varied
a lot, on average a single case took about 10 minutes.  In $393$ of
the $12770$ cases, the starting configuration $\mathcal{S}$ could be
extended to a code of size $77$, and it turned out that the maximum
possible size of $\mathcal{C}$ is indeed $77$.  After filtering out
isomorphic copies, we ended up with $5$ isomorphism classes of
$(6,77,4;3)_2$ constant-dimension codes.  This proves
Theorem~\ref{thm:main}.

\subsection{Analysis of the results}\label{ssec:analysis}
The five isomorphism types will be denoted by A, B, C, D and E.
We investigated the structure of those codes by computer.
The results are discussed in this section.

If $\mathcal{C}\cong\mathcal{C}^\perp$ or, equivalently, $\mathcal{C}$
is invariant under a correlation of $\PG(5,2)$, we will call
$\mathcal{C}$ \emph{self-dual}.\footnote{In general, the map
  $\subspaces(V)\to\subspaces(V)$, $U\mapsto U^\perp$ defines a
  particular correlation (lattice anti-isomorphism) of $\PG(V)$, which
  depends on the choice of the symmetric bilinear form. Composing
  $U\mapsto U^\perp$ with all collineations of $\PG(V)$ (on either
  side) yields all correlations of $\PG(V)$ and proves equivalence of
  the two conditions. Thus self-duality is a purely geometric concept
  and does not depend on the particular bilinear form chosen.} The
five isomorphism types fall into three self-dual cases (A, B, C) and a
single pair of dual codes (D, E).

There is a remarkable property shared by all five types:
\begin{corollary}
\label{cor:lightplane}
Let $\mathcal{C}$ be a $(6,77,4;3)_2$ constant-dimension code.
There is a unique plane $S\in\gaussm{V}{3}$ such that all points on
$S$ have degree $\leq 6$ and all other points have degree $\geq 8$.
\end{corollary}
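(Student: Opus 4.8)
The plan is to reformulate the corollary so that uniqueness becomes automatic, and then to reduce existence to the classification. Write $L=\{P\in\gaussm{V}{1};\,r(P)\le 6\}$ for the set of \emph{light} points. One checks at once that the corollary is equivalent to the single assertion $(\dagger)$: \emph{$L$ is the set of points of a plane $S\in\gaussm{V}{3}$, and every point outside $L$ has degree $\ge 8$.} Indeed, if $(\dagger)$ holds then $S$ itself is a plane with the two properties demanded in the corollary; conversely any plane $S'$ with those two properties satisfies $S'\subseteq L$ (its points are light) and $L\subseteq S'$ (every point outside $S'$ has degree $\ge 8>6$, hence is not light), so $S'=L=S$, giving both existence and uniqueness. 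Note that $(\dagger)$ in fact forces the stronger-looking statement that \emph{no} point has degree exactly $7$, since every point lies either in $L$ (degree $\le 6$) or outside $L$ (degree $\ge 8$); this gap in the degree spectrum is the real content.

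Next I would record the counting background, which shows $(\dagger)$ has the right shape but also indicates why counting alone cannot settle it. Summing the degrees over the $63$ points of $\PG(5,2)$ equals the number of incident point--plane pairs $(P,E)$ with $E\in\mathcal{C}$, namely $7\cdot 77=539$; since $r(P)\le 9$ throughout (Section~\ref{ssec:bound}), the total deficiency $\sum_{P}\bigl(9-r(P)\bigr)$ is $9\cdot 63-539=28$, so at least $35$ points have degree $9$. Assuming $(\dagger)$, the $7$ points of $L$ absorb deficiency at least $7\cdot 3=21$, so the remaining $56$ points absorb deficiency at most $7$; as each of them then has degree $8$ or $9$, at least $49$ of them have degree $9$. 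But these relations, even sharpened by the second-moment identity $\sum_{P}\binom{r(P)}{2}=\#\{\text{pairs of codewords meeting in a point}\}\le\binom{77}{2}$, do not pin the degree sequence down --- they do not, for instance, exclude degree-$7$ points or a set $L$ of size $8$ or $9$ --- so a finer input is needed.

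That input is Theorem~\ref{thm:main}: up to the action of $\GL(6,2)$ there are exactly five optimal $(6,77,4;3)_2$ codes $\mathrm{A},\mathrm{B},\mathrm{C},\mathrm{D},\mathrm{E}$, each available explicitly from the classification (as a list of $77$ subspaces, e.g.\ via their canonical matrices). For each of the five types I would compute the multiset of point degrees $\{r(P);P\in\gaussm{V}{1}\}$, single out the points of degree $\le 6$, and verify (i) that there are exactly seven of them and that they are coplanar, and (ii) that every other point has degree $8$ or $9$ --- i.e.\ that $(\dagger)$ holds. Since all data involved are $\GL(6,2)$-invariant it suffices to do this once per isomorphism type, and the corollary follows.

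The main obstacle is the one flagged in the second paragraph: $(\dagger)$ is not a formal consequence of the counting constraints, so the proof genuinely rests on the classification and on the (a posteriori striking) fact that all five optimal codes share the same degree configuration. One could instead look for a structural proof --- for example attaching to each degree-$9$ point $P$ the solid spanned by its four uncovered lines together with the distinguished plane cut inside it by the relevant partial-spread type of Section~\ref{ssec:bound}, and showing that these configurations interlock so as to single out one common light plane --- but making this rigorous looks at least as laborious as the classification it would be based on.
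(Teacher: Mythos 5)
Your proposal is correct and matches the paper's own justification: the corollary is obtained there exactly as you describe, by appealing to the classification of Theorem~\ref{thm:main} and checking the degree distributions of the five isomorphism types by computer (recorded as $5^7 9^{56}$ and $5^3 6^4 8^4 9^{52}$ in Table~\ref{tbl:77er}), which confirms your reformulation $(\dagger)$ including the absence of degree-$7$ points. Your preliminary counting remarks and the observation that uniqueness is automatic once the light points form a plane are consistent with, and a slightly more explicit framing of, what the paper leaves implicit.
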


For Type~B, the plane $S$ of Corollary~\ref{cor:lightplane} is a
codeword, for all other types it is not. 
An overview of the structure is given in Table~\ref{tbl:77er}.
The column ``pos. of $S$'' gives the frequencies of $\dim(E\cap S)$
when $E$ runs through the codewords of $\mathcal{C}$. 
For example, the entry $0^{48} 1^{28} 3$ for Type~B means that there
are $48$ codewords disjoint from $S$, $28$ codewords intersecting $S$
in a point and the single codeword identical to $S$. 

\begin{table}
\centering
\resizebox{\textwidth}{!}{
\(
\begin{array}{c|cccccc}
& \#\Aut & \text{duality} & \text{degree dist.} & \text{pos. of }S & \text{types of $9$-conf.} & \#\text{$17$-conf.}\\
\hline
\text{A} & 168 & \text{self-dual} & 5^7 9^{56}         & 0^{56}1^{14}2^7 & \text{X}^{28} \text{E}^{28} & 1428 \\
\text{B} &  48 & \text{self-dual} & 5^7 9^{56}         & 0^{48}1^{28}3   & \text{E}^{40} (\text{I}\Delta')^{16} & 1428 \\
\text{C} &   2 & \text{self-dual} & 5^7 9^{56}         & 0^{48}1^{26}2^3 & \text{E}^{34} (\text{I}\Delta)^6 (\text{I}\Delta')^{16} & 1416 \\
\text{D} &   2 & \text{dual of E} & 5^3 6^4 8^4 9^{52} & 0^{48}1^{24}2^5 & \text{E}^{24} (\text{I}\Delta)^9 (\text{I}\Delta')^{19} & 1222 \\
\text{E} &   2 & \text{dual of D} & 5^3 6^4 8^4 9^{52} & 0^{48}1^{24}2^5 & \text{E}^{24} (\text{I}\Delta)^9 (\text{I}\Delta')^{19} & 1222 \\
\end{array}
\)
}
\caption{Types of $(6,77,4;3)_2$ constant-dimension codes}
\label{tbl:77er}
\end{table}

For a double check of our classification, we collected all
$17$-con\-fi\-gu\-ra\-tions which appear as a substructure of the five codes,
see Table~\ref{tbl:77er} for the numbers.  After filtering out
isomorphic copies, we ended up with the same $393$ types that we
already saw as the ones among the $12770$ $17$-configurations which
are extendible to a $(6,77,4;3)_2$ code.

\section{Computer-Free Constructions}\label{sec:nocomp}
We have seen in Section~\ref{sec:comp} that the best $(6,M,4;3)_2$
subspace codes contain large subcodes (of sizes $56$ or $48$; cf.\
Table~\ref{tbl:77er})
disjoint from a fixed plane of $\PG(5,2)$. Since the latter are easy to
construct---for example, large subcodes of binary $(3,3,2)$ LMRD codes
have this property---, it is reasonable to take such codes as the
basis of the construction and try to enlarge them as far as possible.

We start by outlining the main ideas involved in this kind of
construction, which eventually leads to 
a computer-free construction of a $(6,77,4;3)_2$ subspace code of
Type~A.  For this we assume $q=2$, for simplicity. Subsequently we
will develop the approach for general $q$.

First consider a $(6,64,4;3)_2$ LMRD code $\mathcal{L}$ obtained, for
example, by lifting a matrix version of the binary $(3,3,2)$ Gabidulin
code. By Lemma~\ref{lma:lmrd}, the $64$ planes in $\mathcal{L}$ cover
the $7\cdot 64=448$ lines of $\PG(5,2)$ disjoint from the special
plane $S=\langle\uvek_4,\uvek_5,\uvek_6\rangle$, so that no plane
meeting $S$ in a point (and hence containing $4$ lines disjoint from
$S$) can be added to $\mathcal{L}$ without decreasing the subspace
distance. Therefore, in order to overcome the restriction
$\#\mathcal{C}\leq 71$ for subspace codes $\mathcal{C}$ containing
$\mathcal{L}$, we must remove some planes from $\mathcal{L}$,
resulting in a proper subcode
$\mathcal{L}_0\subset\mathcal{L}$. Removing a subset
$\mathcal{L}_1\subseteq\mathcal{L}$ with $\#\mathcal{L}_1=M_0$ will
``free'' $7M_0$ lines, i.e.\ lines disjoint from $S$ that are no
longer covered by
$\mathcal{L}_0=\mathcal{L}\setminus\mathcal{L}_1$. If $M_0=4m_0$ is a
multiple of $4$, it may be possible to rearrange the $28M_0=4\cdot
7M_0$ free lines into $7M_0$ ``new'' planes meeting $S$ in a point
(each new plane containing $4$ free lines), and such that the set
$\mathcal{N}$ of new planes has subspace distance $4$ (equivalently,
covers no line through a point of $S$ twice). The planes in
$\mathcal{N}$ can then be added, resulting in a $(6,M,4;3)_2$ subspace
code $\mathcal{C}=\mathcal{L}_0\cup\mathcal{N}$ of size
$M=64+3m_0>64$. A detailed discussion (following below) will show that
this construction with $M_0=8$ is indeed feasible and, even more, $7$
further planes meeting $S$ in a line can be added to $\mathcal{C}$
without decreasing the subspace distance. This yields an optimal
$(6,77,4;3)_2$ subspace code of Type~A. Moreover, the construction
generalizes to arbitrary $q$, producing a $(6,q^6+2q^2+2q+1,4;3)_q$
subspace code.

\subsection{Removing Subspaces from MRD Codes}\label{ssec:remove}
Let $\mathcal{L}$ be a $(6,q^6,4;3)_q$ LMRD code, arising from a
$q$-ary $(3,3,2)$ MRD code $\mathcal{A}$; cf.\
Section~\ref{ssec:lmrd}.  Then, for any hyperplane $H$ of $\PG(5,q)$
containing $S=\langle\uvek_4,\uvek_5,\uvek_6\rangle$, the corresponding
hyperplane section $\mathcal{L}\cap H=\{E\cap H;E\in\mathcal{L}\}$
consists of all $q^6$ lines in $H\setminus S$, by
Lemma~\ref{lma:lmrd}. Writing $\cm(H)=\left(
    \begin{smallmatrix}
      \mat{Z}&\mat{0}\\\mat{0}&\imat_3
    \end{smallmatrix}
  \right)$, where $\mat{Z}\in\F_q^{2\times 3}$ is the canonical matrix
  associated with $H$ (viewed as a line in $\PG(5,q)/S$), these $q^6$
  lines are $\langle(\mat{Z}|\mat{ZA})\rangle$ with
  $\mat{A}\in\mathcal{A}$. This simplifies to
  $\langle(\mat{Z}|\mat{B})\rangle$ with $\mat{B}\in\F_q^{2\times 3}$
  arbitrary.

  Our first goal in this section is to determine which subsets
  $\mathcal{R}\subset\mathcal{A}$ (``removable subsets'') of size
  $q^2$ have the property that the corresponding hyperplane section
  $\mathcal{L}_1\cap H$,
  $\mathcal{L}_1=\left\{\langle(\imat_3|\mat{A})\rangle;
    \mat{A}\in\mathcal{R}\right\}$, consists of the $q^2$ lines
  disjoint from $S$ in a new plane (a plane meeting $S$ in a single
  point). Assuming that $\mathcal{A}$ is linear over $\F_q$ will
  simplify the characterization of removable subsets.

\begin{lemma}
  \label{lma:crucial}
  Suppose $\mathcal{A}$ is a $q$-ary linear $(3,3,2)$ MRD code, and
  $H$ is a hyperplane of $\PG(5,q)$ containing $S$ with $\cm(H)=\left(
    \begin{smallmatrix}
      \mat{Z}&\mat{0}\\\mat{0}&\imat_3
    \end{smallmatrix}
  \right)$. For $\mathcal{R}\subseteq\mathcal{A}$ the following are equivalent:
\begin{enumerate}[(i)]
\item The $H$-section $\mathcal{L}_1\cap H$
  corresponding to $\mathcal{R}$ consists of
  the $q^2$ lines disjoint from $S$ in a new plane $N$.
\item $\mat{Z}\mathcal{R}=\{\mat{ZA};\mat{A}\in\mathcal{R}\}$ is a
  line in $\LAG(2,3,q)$.
\item $\mathcal{R}=\mat{A}_0+\mathcal{D}$ for some
  $\mat{A}_0\in\mathcal{A}$ and some $2$-dimensional
  $\F_q$-subspace $\mathcal{D}$ of $\mathcal{A}$ with the following
  properties: $\mathcal{D}$ has constant rank $2$, and the
  ($1$-dimensional) left kernels of the nonzero members of
  $\mathcal{D}$ generate the row space
  $\langle\mat{Z}\rangle$.\footnote{The term ``constant-rank'' has its
  usual meaning in Matrix Theory, imposing the same rank on all
  nonzero members of the subspace.} 
\end{enumerate}
If these conditions are satisfied then the new plane $N$ has $\cm(N)=\left(
    \begin{smallmatrix}
      \mat{Z}&\mat{ZA}_0\\\mat{0}&\vek{s}
    \end{smallmatrix}
\right)$, where $\vek{s}\in\F_q^3$ is a generator of the
common $1$-dimensional row space of the nonzero matrices in
$\mat{Z}\mathcal{D}$. Moreover, $\F_q\vek{s}=N\cap S$ is the point at
infinity of the line $\mat{Z}\mathcal{R}$ in (ii).
\end{lemma}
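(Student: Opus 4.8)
The plan is to prove the chain of equivalences (i) $\Leftrightarrow$ (ii) $\Leftrightarrow$ (iii) and then verify the concluding formula for $\cm(N)$, relying heavily on the correspondence of Lemma~\ref{lma:lag} between $\LAG(2,3,q)$ and the subgeometry $\attspace$ of $\PG(5,q)$ consisting of the subspaces meeting $S$.

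First I would treat (i) $\Leftrightarrow$ (ii). As noted just before the lemma, the $H$-section of $\mathcal{L}$ consists of all lines $\langle(\mat{Z}\mid\mat{B})\rangle$ with $\mat{B}\in\F_q^{2\times 3}$, and the section corresponding to a subset $\mathcal{R}\subseteq\mathcal{A}$ is $\{\langle(\mat{Z}\mid\mat{Z}\mat{A})\rangle;\mat{A}\in\mathcal{R}\}$, i.e.\ the image of $\mat{Z}\mathcal{R}\subseteq\F_q^{2\times 3}$ under the lifting map $\mat{B}\mapsto\langle(\mat{Z}\mid\mat{B})\rangle$. Under the isomorphism of Lemma~\ref{lma:lag} (with $m=2$, $n=3$, so $m+n=5$), a line of $\LAG(2,3,q)$ corresponds exactly to a $3$-flat of $\PG(5,q)$ — that is, a plane — meeting the special plane $S$ in a single point. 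Thus $\mathcal{L}_1\cap H$ is the set of $q^2$ lines through such a plane $N$ disjoint from $S$ precisely when $\mat{Z}\mathcal{R}$ is a line in $\LAG(2,3,q)$; here one uses that $|\mathcal{R}|=q^2$ is forced by (and forces) $\mat{Z}\mathcal{R}$ having $q^2$ elements, since $\mat{A}\mapsto\mat{Z}\mat{A}$ is a bijection $\mathcal{A}\to\F_q^{2\times 3}$ by Lemma~\ref{lma:mrd}.

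Next, (ii) $\Leftrightarrow$ (iii). Because $\mat{A}\mapsto\mat{Z}\mat{A}$ is an $\F_q$-linear bijection from $\mathcal{A}$ onto $\F_q^{2\times 3}$ (Lemma~\ref{lma:mrd}, linear case), $\mat{Z}\mathcal{R}$ is an affine line (a coset of a $1$-dimensional submodule $\mathcal{U}$ of the $\Mat(2,q)$-module $\F_q^{2\times 3}$) if and only if $\mathcal{R}=\mat{A}_0+\mathcal{D}$ where $\mathcal{D}=(\mat{Z}|_{\mathcal{A}})^{-1}(\mathcal{U})$ is a $2$-dimensional $\F_q$-subspace of $\mathcal{A}$ and $\mat{Z}\mathcal{D}=\mathcal{U}$. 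By definition of the flats of $\LAG(2,3,q)$, $\mathcal{U}$ is a $1$-flat through $\mat{0}$ iff $\mathcal{U}=\{\mat{M}\in\F_q^{2\times 3};\langle\mat{M}\rangle\subseteq\F_q\vek{s}\}$ for a single vector $\vek{s}\in\F_q^3$; i.e.\ every nonzero matrix in $\mat{Z}\mathcal{D}$ has row space $\F_q\vek{s}$. The remaining task is to translate "$\mat{Z}\mathcal{D}$ has all nonzero matrices of rank $1$ with common row space $\F_q\vek{s}$" into the stated property of $\mathcal{D}$. Here is where I expect the main work: given $\mat{C}\in\mathcal{D}$ nonzero, $\mat{C}$ has rank $2$ (MRD minimum distance $2$), so $\mat{C}$ has a $1$-dimensional left kernel $\ker_\ell(\mat{C})=\{\vek{x}\in\F_q^2;\vek{x}\mat{C}=\vek{0}\}$; since $\mat{Z}$ is $2\times 3$ of rank $2$, $\rank(\mat{Z}\mat{C})=1$ always, and $\langle\mat{Z}\mat{C}\rangle\subseteq\langle\mat{C}\rangle$ so the row space is automatically "common" only if we additionally control it. The correct reading (and what I would verify) is: the left kernel of $\mat{Z}\mat{C}$ as a $2\times3$ matrix is $\{\vek{x};\vek{x}\mat{Z}\mat{C}=\vek{0}\}=(\mat{Z}^{\mathsf{T}})^{-1}\!\big(\text{(right annihilator)}\big)$; working instead in $\mathcal{A}$, for $\mat{C}\in\mathcal{D}$ the line $\ker_\ell(\mat{C})\subseteq\F_q^3$ (a point of $\PG(2,q)$) ranges over a set whose span is $\langle\mat{Z}\rangle$ exactly when $\mat{Z}\mathcal{D}$ is $1$-dimensional as a flat, because $\vek{x}\mat{Z}\mat{C}=\vek{0}$ for all $\mat{C}\in\mathcal{D}$ would force $\vek{x}\mat{Z}\mathcal{D}=\vek{0}$, collapsing $\mat{Z}\mathcal{D}$. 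So the equivalence comes down to the linear-algebra identity that a $2$-dimensional constant-rank-$2$ subspace $\mathcal{D}\subseteq\F_q^{3\times 3}$ maps under $\mat{Z}\cdot(-)$ to a $1$-flat of $\LAG(2,3,q)$ iff the left kernels of its nonzero elements span the row space $\langle\mat{Z}\rangle$ — this I would prove by a short dimension count, checking that the failure of the kernels to span $\langle\mat{Z}\rangle$ would give a common left vector killing all of $\mat{Z}\mathcal{D}$ (making it $0$ or $\le 1$-dimensional, contradiction), and conversely.

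Finally, for the closing formula: once (iii) holds with $\mathcal{R}=\mat{A}_0+\mathcal{D}$, the affine line $\mat{Z}\mathcal{R}=\mat{Z}\mat{A}_0+\mat{Z}\mathcal{D}$ lifts under Lemma~\ref{lma:lag} to the plane $N$ whose finite part is anchored at $\langle(\imat_2\mid\mat{Z}\mat{A}_0)\rangle$-type data and whose point at infinity $N\cap S$ is the row space common to the nonzero matrices of $\mat{Z}\mathcal{D}$, namely $\F_q\vek{s}$. Concretely, every line $\langle(\mat{Z}\mid\mat{Z}\mat{A})\rangle$ with $\mat{A}\in\mathcal{R}$ satisfies $\mat{Z}\mat{A}=\mat{Z}\mat{A}_0+\mat{Z}\mat{C}$ with $\mat{C}\in\mathcal{D}$, and $\mat{Z}\mat{C}$ has all rows in $\F_q\vek{s}$; hence each such line lies in the plane spanned by $\langle(\mat{Z}\mid\mat{Z}\mat{A}_0)\rangle$ and the point $(\vek{0}\mid\vek{s})$. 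This gives $N=\langle(\mat{Z}\mid\mat{Z}\mat{A}_0)\rangle + \F_q(\vek{0}\mid\vek{s})$, whose reduced row echelon form is visibly $\left(\begin{smallmatrix}\mat{Z}&\mat{Z}\mat{A}_0\\\mat{0}&\vek{s}\end{smallmatrix}\right)$ (the pivots of $\mat{Z}$ occupy the first block, and $\vek{s}$, being outside $\langle\mat{Z}\rangle$'s contribution, supplies the third pivot in the last block); a brief check that these $q^2$ lines are exactly the lines of $N$ disjoint from $S$ — there are $q^2$ of them since $N$ meets $S$ in the single point $\F_q\vek{s}$ — completes the argument, and $\F_q\vek{s}=N\cap S$ is by construction the point at infinity of the $\LAG$-line $\mat{Z}\mathcal{R}$, as claimed in (ii).
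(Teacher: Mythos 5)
Your overall route matches the paper's: (i)$\iff$(ii) via Lemma~\ref{lma:lag} applied inside $H$, and (ii)$\iff$(iii) via the $\F_q$-linear bijection $\mat{A}\mapsto\mat{ZA}$ of Lemma~\ref{lma:mrd}, after which everything reduces to characterizing when $\mat{Z}\mathcal{D}$ is the full set of matrices with row space in a fixed $\F_q\vek{s}$. The treatment of (i)$\iff$(ii) and of the closing formula for $\cm(N)$ is essentially the paper's argument (modulo a terminological slip: lines of $\LAG(2,3,q)$ here correspond to planes of $H\cong\PG(4,q)$, i.e.\ $3$-dimensional subspaces of the $5$-dimensional space $H$, not ``$3$-flats of $\PG(5,q)$'').

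There is, however, a genuine gap in the direction (iii)$\Rightarrow$(ii). You correctly flag that the common row space ``is automatically `common' only if we additionally control it,'' but your proposed control --- the ``short dimension count'' --- only shows that if the left kernels fail to span $\langle\mat{Z}\rangle$ then $\mat{Z}\mathcal{D}$ is not a line; it never establishes the converse, which is the substantive half. Concretely: the hypotheses of (iii) guarantee that every nonzero $\mat{ZC}$, $\mat{C}\in\mathcal{D}$, has rank $1$ (because $\kernel_\ell(\mat{C})\subseteq\langle\mat{Z}\rangle$), but a $2$-dimensional constant-rank-$1$ subspace of $\F_q^{2\times 3}$ need not have a common row space --- it could instead have a common column space, e.g.\ $\{\vek{c}^{\tp}\vek{w};\vek{w}\in W\}$ for a fixed $\vek{c}\in\F_q^2$ and a $2$-dimensional $W\subseteq\F_q^3$, which is \emph{not} a line of $\LAG(2,3,q)$. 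Ruling this out and exhibiting the common row space is exactly where the paper works: choosing a basis $\mat{A}_1,\mat{A}_2$ of $\mathcal{D}$ with kernel vectors $\vek{z}_1,\vek{z}_2$ spanning $\langle\mat{Z}\rangle$, picking $(\lambda,\mu)\neq(0,0)$ with $(\lambda\vek{z}_1+\mu\vek{z}_2)(\mat{A}_1+\mat{A}_2)=\vek{0}$, and expanding to get $\lambda\vek{z}_1\mat{A}_2=-\mu\vek{z}_2\mat{A}_1$, which identifies the row spaces of $\mat{ZA}_1$ and $\mat{ZA}_2$ and hence of all of $\mat{Z}\mathcal{D}$. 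Nothing in your sketch substitutes for this step. Two subsidiary inaccuracies feed the confusion: the MRD property only gives $\rank(\mat{C})\geq 2$ for nonzero $\mat{C}\in\mathcal{A}$ (so ``constant rank $2$'' in (iii) is a real restriction, forced in the direction (ii)$\Rightarrow$(iii) by $\rank(\mat{ZC})=1$), and the assertion ``$\rank(\mat{ZC})=1$ always'' is false unless $\kernel_\ell(\mat{C})\subseteq\langle\mat{Z}\rangle$; if $\mat{C}$ is invertible, or has rank $2$ with kernel outside $\langle\mat{Z}\rangle$, then $\rank(\mat{ZC})=2$. Likewise, a common left vector annihilating $\mat{Z}\mathcal{D}$ does not make $\mat{Z}\mathcal{D}$ ``$0$ or $\leq 1$-dimensional'' as a subspace; it only bounds the ranks of its elements.
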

\begin{proof}
  (i)$\iff$(ii): The lines $L$ in the $H$-section have canonical matrices
  $\cm(L)
  =(\mat{Z}|\mat{ZA})$,
  $\mat{A}\in\mathcal{R}$. 
  Lemma~\ref{lma:lag} (with an obvious modification) yields that
  $\mat{B}\mapsto\langle(\mat{Z}|\mat{B})\rangle$ maps the lines of $\LAG(2,3,q)$ to
  the planes of $H$ intersecting $S$ in a point. Hence
  $\mat{Z}\mathcal{R}$ is a line in $\LAG(2,3,q)$ iff the $q^2$ lines
  in the $H$-section are incident with a new plane $N$.

  (ii)$\iff$(iii): We may assume
  $\mat{0}\in\mathcal{R}=\mathcal{D}$. The point set
  $\mathcal{U}=\mat{Z}\mathcal{D}$ is a line in $\LAG(2,3,q)$ iff the
  $q^2-1$ nonzero matrices in $\mathcal{U}$ have a common
  $1$-dimensional row space, say $\F_q\vek{s}$, and account for all such
  matrices. If this is the case,
  then $\mathcal{D}$ must be an $\F_q$-subspace of $\mathcal{A}$
  (since $\mathcal{U}$ is an $\F_q$-subspace of $\F_q^{2\times 3}$ and
  $\mathcal{A}\to\F_q^{2\times 3}$, $\mat{A}\mapsto\mat{ZA}$ is
  bijective.\footnote{Here we need the assumption that $\mathcal{A}$ is
    linear.}) Further, $\mathcal{D}$ must have constant rank $2$ (since
  $\mathcal{D}\subseteq\mathcal{A}$ forces $\rank(\mat{A})\geq 2$ for
  all $\mat{A}\in\mathcal{D}$), and the left kernels of the nonzero
  matrices in $\mathcal{D}$ must generate 
  $\langle\mat{Z}\rangle$ (since the left kernels of the nonzero
  matrices in $\mathcal{U}=\mat{Z}\mathcal{D}$ account for all
  $1$-dimensional subspaces of $\F_q^2$). Conversely,
  suppose that $\mathcal{D}$ satisfies these conditions. Then all
  nonzero matrices in $\mathcal{U}$ have rank $1$, and we must show
  that they have the same row space. Let $\mat{A}_1,\mat{A}_2$ be a
  basis of $\mathcal{D}$ and
  $\vek{z}_1,\vek{z}_2\in\F_q^3\setminus\{\vek{0}\}$ with
  $\vek{z}_1\mat{A}_1=\vek{z}_2\mat{A}_2=\vek{0}$. Then $\vek{z}_1$,
  $\vek{z}_2$ span $\langle\mat{Z}\rangle$ (otherwise all matrices in
  $\mathcal{D}$ would have kernel $\F_q\vek{z}_1=\F_q\vek{z}_2$), and hence
  there exist $\lambda,\mu\in\F_q$, $(\lambda,\mu)\neq(0,0)$, such that
  $(\lambda\vek{z}_1+\mu\vek{z}_2)(\mat{A}_1+\mat{A}_2)=\vek{0}$. Expanding,
  we find $\lambda\vek{z}_1\mat{A}_2=-\mu\vek{z}_2\mat{A}_1$, implying
  that $\mat{ZA}_1$ and $\mat{ZA}_2$ have the same row space
  $\F_q(\vek{z}_1\mat{A}_2)=\F_q(\vek{z}_2\mat{A}_1)=\F_q\vek{s}$, say. Since
  $\mathcal{U}=\langle\mat{ZA}_1,\mat{ZA}_2\rangle$, the other
  nonzero matrices in $\mathcal{U}$ must have row space $\F_q\vek{s}$
  as well.

  The remaining assertions are then easy consequences.
\end{proof}

\begin{remark}
  \label{rmk:crucial}
  The conditions imposed on $\mathcal{D}$ in
  Lemma~\ref{lma:crucial}(iii) imply that the left kernels of the nonzero
  matrices in $\mathcal{D}$ form the set of $1$-dimensional subspaces
  of a $2$-dimensional subspace of $\F_q^3$ (viz.,
  $\langle\mat{Z}\rangle$). Two matrices $\mat{A}_1,\mat{A}_2\in\mathcal{A}$
  generate a $2$-dimensional constant-rank-$2$ subspace 
  with this property iff
  $\rank(\mat{A}_1)=\rank(\mat{A}_2)=2$, the left kernels
  $K_1=\F_q\vek{z}_1$, $K_2=\F_q\vek{z}_2$ of
  $\mat{A}_1$ resp.\ $\mat{A}_2$ are distinct, and
  $\vek{z}_1\mat{A}_2$, $\vek{z}_2\mat{A}_1$ are linearly
  dependent.\footnote{This is clear from the proof of the lemma.}
\end{remark}

Since the maps $\mathcal{A}\to\F_q^{2\times 3}$,
$\mat{A}\mapsto\mat{ZA}$
are bijections, we note
the following consequence of Lemma~\ref{rmk:crucial}. For each
$2$-dimensional subspace $Z$ of $\F_q^3$ (representing a
hyperplane $H$ in $\PG(5,q)$ as described above, with
$\mat{Z}=\cm(Z)$) and each $1$-dimensional subspace 
$P$ of $\F_q^3$ (representing a point of $S$, after padding the
coordinate vector $\vek{s}=\cm(P)$ with three zeros), there exists
precisely one $2$-dimensional 
subspace $\mathcal{D}=\mathcal{D}(Z,P)$ of $\mathcal{A}$ with the
properties in Lemma~\ref{lma:crucial}(iii). The subspace $\mathcal{D}$
consists of all $\mat{A}\in\mathcal{A}$ with $\langle\mat{ZA}\rangle=P$.

\begin{example}
  \label{ex:crucial}
  We determine the subspaces $\mathcal{D}(Z,P)$ for the $q$-ary
  $(3,3,2)$ Gabidulin code $\mathcal{G}$; cf.\ Example~\ref{ex:332q}.
  Working in a coordinate-independent manner, suppose 
  $Z=\langle a,b\rangle$ with $a,b\in\F_{q^3}^\times$ linearly independent
  over $\F_q$ and $P=\langle c\rangle$ with $c\in\F_{q^3}^\times$. Since
  $x\mapsto ux^q-u^qx$,
  $u\in\F_{q^3}^\times$, has kernel $\F_qu$, the maps
  \begin{equation*}
    f(x)=\frac{c(ax^q-a^qx)}{ab^q-a^qb},\quad g(x)=\frac{c(bx^q-b^qx)}{ba^q-b^qa}
  \end{equation*}
  are well-defined, have rank $2$, and satisfy $f(a)=g(b)=0$,
  $f(b)=g(a)=c$. Hence $\mathcal{D}(Z,P)=\langle f,g\rangle$.
  We may also write
  \[
      \mathcal{D}\bigl(Z,\F_q(ab^q-a^qb)\bigr)=\{ux^q-u^qx;u\in Z\}\text{,}
  \]
  making the linear dependence on $Z$ more visible. Scaling by a
  nonzero constant in $\F_{q^3}^\times$ then 
  yields the general $\mathcal{D}(Z,P)$.
\end{example}

It is obvious from Lemma~\ref{lma:crucial} that
$\mathcal{D}(Z_1,P_1)\neq\mathcal{D}(Z_2,P_2)$ whenever
$(Z_1,P_1)\neq(Z_2,P_2)$. Hence a single coset
$\mathcal{R}=\mat{A}_0+\mathcal{D}\subset\mathcal{A}$ leads only to a
single new plane in one particular hyperplane section determined by
$\mathcal{D}$, and therefore Lemma~\ref{lma:crucial} cannot be
directly applied to yield $(6,M,4;3)_q$ subspace codes larger than
LMRD codes. In order to achieve $\#\mathcal{N}>\#\mathcal{R}$,
we should instead look for larger sets $\mathcal{R}$ having the
property that the lines in the corresponding $q^2+q+1$
hyperplane sections can be simultaneously
arranged into new planes. This requires $\mathcal{R}$ to be a union of
cosets of spaces $\mathcal{D}(Z,P)$ simultaneously for all $Z$, for
example a subspace containing a space $\mathcal{D}(Z,P)$ for each
$Z$. Further we require that the corresponding points $P$ are
different for different choices of $Z$, excluding ``unwanted''
multiple covers of lines through $P$ by new planes in different
hyperplanes $H\supset S$.
\begin{lemma}
  \label{lma:simul}
  Let $\mathcal{R}$ be a $t$-dimensional
  $\F_q$-subspace of a $q$-ary linear $(3,3,2)$ MRD
  code $\mathcal{A}$, having the following properties:
  \begin{enumerate}[(i)]
  \item For each
  $2$-dimensional subspace $Z$ of $\F_q^3$ there exists a
  $1$-di\-men\-sio\-nal subspace $P=Z'$ of $\F_q^3$ such that
  $\mathcal{D}(Z,P)\subseteq\mathcal{R}$.
  \item The map $Z\mapsto Z'$ defines a bijection from
    $2$-dimensional subspaces to $1$-dimensional subspaces of $\F_q^3$. 
  \item $\rank\left(
      \begin{smallmatrix}
        \mat{ZA}_1-\mat{ZA}_2\\
        \vek{s}
      \end{smallmatrix}
\right)=3$ whenever $\mat{A}_1$, $\mat{A}_2$ are in different cosets
of $\mathcal{D}(Z,P)$ in $\mathcal{R}$, where $P=Z'$,
    $\mat{Z}=\cm(Z)$, and $\vek{s}=\cm(P)$.
  \end{enumerate}
  Then the $(q^2+q+1)q^t$ lines covered by the planes in
  $\mathcal{L}$ corresponding to $\mathcal{R}$ can be
  rearranged into $(q^2+q+1)q^{t-2}$ new planes meeting $S$ in a point
  and such that the set $\mathcal{N}$ of new planes has minimum
  subspace distance $4$. Consequently, the remaining $q^6-q^t$ planes in
  $\mathcal{L}$ and the new planes in $\mathcal{N}$ constitute a
  $(6,q^6+q^{t-1}+q^{t-2},4;3)_q$ subspace code.
\end{lemma}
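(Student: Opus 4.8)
The plan is to feed the subspace $\mathcal{R}$ into the machinery of Lemma~\ref{lma:crucial}, one hyperplane at a time, and then check that the resulting new planes, taken over all hyperplanes $H\supset S$, together with the untouched LMRD codewords, form a code of the stated size and minimum distance. First I would fix a $2$-dimensional subspace $Z$ of $\F_q^3$ with $\mat Z=\cm(Z)$ and let $H$ be the corresponding hyperplane of $\PG(5,q)$. By hypothesis~(i) there is a point $P=Z'$ with $\mathcal{D}:=\mathcal{D}(Z,P)\subseteq\mathcal{R}$; since $\mathcal{R}$ is $t$-dimensional and $\mathcal D$ is $2$-dimensional, $\mathcal R$ splits into $q^{t-2}$ cosets of $\mathcal D$. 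Each such coset $\mat A_0+\mathcal D$ satisfies condition~(iii) of Lemma~\ref{lma:crucial} (the constant-rank and left-kernel requirements are inherited from $\mathcal D=\mathcal D(Z,P)$), so the $q^2$ lines $\langle(\mat Z|\mat{ZA})\rangle$ with $\mat A\in\mat A_0+\mathcal D$ are exactly the lines disjoint from $S$ inside a single new plane $N$ with $\cm(N)=\left(\begin{smallmatrix}\mat Z&\mat{ZA}_0\\\mat 0&\vek s\end{smallmatrix}\right)$, where $\F_q\vek s=P$. Running over the $q^{t-2}$ cosets produces $q^{t-2}$ new planes in $H$; running over all $q^2+q+1$ choices of $Z$ gives the claimed total of $(q^2+q+1)q^{t-2}$ new planes $\mathcal N$. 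A line count confirms consistency: the $q^t$ planes of $\mathcal L$ indexed by $\mathcal R$ cover $(q^2+q+1)q^t$ lines disjoint from $S$ (Lemma~\ref{lma:lmrd}), and this is exactly $q^2$ lines times $(q^2+q+1)q^{t-2}$ new planes, so the rearrangement uses every freed line exactly once.

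Next I would verify $\sdist(\mathcal N)=4$, i.e.\ that no two planes of $\mathcal N$ meet in a line. Two new planes $N_1,N_2$ coming from the \emph{same} $Z$ correspond to distinct cosets of $\mathcal D(Z,P)$, and condition~(iii) of the present lemma is precisely the statement that the matrix $\left(\begin{smallmatrix}\mat{ZA}_1-\mat{ZA}_2\\\vek s\end{smallmatrix}\right)$ has rank $3$; comparing the canonical matrices $\left(\begin{smallmatrix}\mat Z&\mat{ZA}_i\\\mat 0&\vek s\end{smallmatrix}\right)$ shows $\dim(N_1\cap N_2)\le 1$ exactly under this rank condition, so same-$Z$ pairs are fine. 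Two new planes coming from \emph{different} $Z_1\neq Z_2$ meet $S$ in $Z_1'$ and $Z_2'$ respectively, and by the bijectivity hypothesis~(ii) these two points of $S$ are distinct; hence $N_1\cap S\neq N_2\cap S$, and since any two planes meeting in a line would share that line's full point set with $S$ (as $N_i\cap S$ is a point, $N_1\cap N_2$ a line forces... ) — more carefully, if $\dim(N_1\cap N_2)=2$ then $N_1\cap N_2$ is a plane equal to both, impossible; if it is a line $L$, then $L\cap S$ has dimension at most $1$ and lies in $N_i\cap S=Z_i'$ for both $i$, forcing $Z_1'=Z_2'$ unless $L\cap S=\{\vek 0\}$, in which case $L$ is a line disjoint from $S$ covered by two planes of $\mathcal N$, contradicting the line count above. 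So in all cases $\sdist(\mathcal N)=4$.

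Finally I would assemble the full code $\mathcal C=\mathcal L_0\cup\mathcal N$, where $\mathcal L_0=\mathcal L\setminus\mathcal L_1$ is obtained by deleting the $q^t$ LMRD codewords indexed by $\mathcal R$. Its size is $(q^6-q^t)+(q^2+q+1)q^{t-2}=q^6+q^{t-1}+q^{t-2}$, as claimed. It remains to check the three types of distance conditions: $\mathcal L_0$ inherits $\sdist=4$ from $\mathcal L$; $\mathcal N$ has $\sdist=4$ by the previous paragraph; and for a mixed pair $E\in\mathcal L_0$, $N\in\mathcal N$ we need $\dim(E\cap N)\le 1$. Since $E$ is disjoint from $S$ while $N\cap S$ is a point, $E\cap N$ is disjoint from $S$, so if it were a line it would be a line disjoint from $S$ lying in both a remaining LMRD plane and a new plane — but the new planes only cover lines disjoint from $S$ that were freed by deleting $\mathcal L_1$, i.e.\ lines originally covered \emph{only} by planes in $\mathcal L_1$ (Lemma~\ref{lma:lmrd} says each such line has a unique cover in $\mathcal L$), contradicting $E\in\mathcal L_0=\mathcal L\setminus\mathcal L_1$. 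Hence $\sdist(\mathcal C)=4$. The main obstacle I anticipate is bookkeeping in the mixed-pair and different-$Z$ cases — making the "freed lines are covered exactly once, and only by the intended planes" argument airtight via Lemma~\ref{lma:lmrd}, rather than any deep new idea; conditions (i)--(iii) have been tailored precisely so that Lemma~\ref{lma:crucial} applies verbatim coset-by-coset and the only remaining work is the global line-counting consistency check.
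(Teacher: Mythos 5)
Your proposal is correct and follows essentially the same route as the paper's proof: apply Lemma~\ref{lma:crucial} coset-by-coset of $\mathcal{D}(Z,Z')$ within each hyperplane $H\supset S$, use hypothesis (ii) to separate new planes from different hyperplanes, and use hypothesis (iii) to show same-hyperplane new planes satisfy $N+N'=H$, hence meet only in the point $Z'$. You in fact spell out two checks the paper leaves implicit (that a line shared by new planes from different hyperplanes must meet $S$, and the mixed-pair check that $E\in\mathcal{L}_0$ and $N\in\mathcal{N}$ cannot share a line, which indeed follows from the unique-covering property of Lemma~\ref{lma:lmrd}), so no gap remains.
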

\begin{proof}
  As before let
  $\mathcal{L}_1$ denote the set of $q^t$ planes of the form
  $\langle(\imat_3|\mat{A})\rangle$, $\mat{A}\in\mathcal{R}$.
  By (i), the $q^t$ lines in any of the $q^2+q+1$ hyperplane sections
  $\mathcal{L}_1\cap H$ are partitioned into $q^{t-2}$ new planes
  meeting $S$ in the same point $P=Z'$. Condition~(ii) ensures that
  new planes in different hyperplanes have no line in common
  and hence subspace distance $\geq 4$. Finally, if $N$, $N'$ are
  distinct new planes in the same hyperplane section then
  $\cm(N)=\left(\begin{smallmatrix}
      \mat{Z}&\mat{ZA}\\
      \vek{0}&\vek{s}
    \end{smallmatrix}\right)$, $\cm(N')=\left(\begin{smallmatrix}
      \mat{Z}&\mat{ZA'}\\
      \vek{0}&\vek{s}
    \end{smallmatrix}\right)$ for some $\mat{A}$,
  $\mat{A}'\in\mathcal{R}$ with
  $\mat{A}+\mathcal{D}\neq\mat{A}'+\mathcal{D}$ where
  $\mathcal{D}=\mathcal{D}(Z,P)$. Condition~(iii) is equivalent to
  $N+N'=H$, i.e.\ $N\cap N'=P$ or $\sdist(N,N')=4$. 
\end{proof}
Condition~(iii) in Lemma~\ref{lma:simul} is also equivalent to the
requirement that
the set $\mathcal{N}_H$ of $q^{t-2}$ new planes in any hyperplane
$H\supset S$ must form a partial spread
in the quotient geometry $\PG(H/P)\cong\PG(3,q)$. This implies $t\leq 4$
with equality iff $\mathcal{N}_H\cup\{S\}$ forms a spread in $\PG(H/P)$. As
$t\leq 2$ is impossible (cf. the remarks before Lemma~\ref{lma:simul}),
our focus from now on will be on the case $t=3$. 
\begin{example}
  \label{ex:simul}[Continuation of Example~\ref{ex:crucial}]
  Setting $\mathcal{R}=\{ux^q-u^qx;u\in\F_{q^3}\}$ and 
  $Z'=P=\F_q(ab^q-a^qb)$ for a $2$-dimensional
  subspace $Z=\langle a,b\rangle$ of $\F_{q^3}$, we have
  $\mathcal{D}(Z,P)\subset\mathcal{R}$ for any $Z$. 
  Using $ab^q-a^qb=a^{q+1}\bigl((a^{-1}b)^q-(a^{-1}b)\bigr)$, the
  additive version of Hilbert's
  Theorem~90 \cite[Satz~90]{hilbert97}
  and $\gcd(q^2+q+1,q+1)=1$, we see that $Z\mapsto Z'$ is
  bijective.\footnote{In fact $Z\mapsto Z'$ defines a correlation of
    $\PG(\F_{q^3})\cong\PG(2,q)$: The image of
    the line pencil through $\F_qa$ is the set of points on the line
    with equation $\trace_{\F_{q^3}/\F_q}(xa^{-q-1})=0$.}
  Condition~(iii) of Lemma~\ref{lma:simul} is equivalent to $\langle
  ca^q-c^qa,cb^q-c^qb,ab^q-a^qb\rangle=\F_{q^3}$ whenever $\langle
  a,b,c\rangle=\F_{q^3}$. This is in fact true, as we now show: $a,b,c$ form a
  basis of $\F_{q^3}/\F_q$ iff
  \begin{equation*}
    \begin{vmatrix}
      a&b&c\\
      a^q&b^q&c^q\\
      a^{q^2}&b^{q^2}&c^{q^2}
    \end{vmatrix}
    \neq 0.
  \end{equation*}
  The adjoint determinant is
  \begin{equation*}
    \begin{vmatrix}
      A^q&A^{q^2}&A\\
      B^q&B^{q^2}&B\\
      C^q&C^{q^2}&C
    \end{vmatrix}
  \end{equation*}
  with $A=bc^q-b^qc$, $B=ca^q-c^qa$, $C=ab^q-a^qb$. By the same token,
  the adjoint determinant is $\neq 0$ iff $A$, $B$, $C$ form a basis
  of $\F_{q^3}/\F_q$. From this our claim follows.

  Thus $\mathcal{R}$ satisfies all conditions of
  Lemma~\ref{lma:simul} and gives rise to a $(6,q^6+q^2+q,4;3)_q$
  subspace code $\mathcal{C}$
  consisting of $q^6-q^3$ ``old'' planes disjoint from $S$ and
  $q^3+q^2+q$ ``new'' planes meeting $S$ in a point, $q$ of them
  passing through any of the $q^2+q+1$ points in $S$.  

  In the coordinate-free model introduced in Section~\ref{ssec:lmrd},
  $\mathcal{C}$ consists of the $q^6-q^3$ planes
  $G(a_0,a_1)=\bigl\{(x,a_1x^q-a_0x);x\in\F_{q^3}\bigr\}$ with
  $a_0,a_1\in\F_{q^3}$, $a_0\neq a_1^q$ and the $q(q^2+q+1)$ planes
  $N(a,b,c)=\bigl\{(x,cx^q-c^qx+y(ab^q-a^qb);x\in Z,y\in\F_q\bigr\}$
  with $Z=\langle a,b\rangle$ any $2$-dimensional $\F_q$-subspace of
  $\F_{q^3}$ and $c\in\F_{q^3}/Z$.
\end{example}

\subsection{Construction~A and the proof of
  Theorem~\ref{thm:allq}}\label{ssec:A}
In this section we complete the construction of an optimal
$(6,77,4;3)_2$ subspace code of Type~A in Table~\ref{tbl:77er}. This
will be done by adding $7$ further planes to the $(6,70,4;3)_2$ code
$\mathcal{C}$ of Example~\ref{ex:simul}. Since
$\mathcal{C}=\mathcal{L}_0\oplus\mathcal{N}$ already covers all 
  lines disjoint from $S$, these planes must meet $S$ in a
  line.\footnote{The plane $S$ itself may also be added to $\mathcal{C}$
    without decreasing the subspace distance, resulting in a maximal
    $(6,73,4;3)_2$ subspace code.}
  Hence the augmented $(6,77,4;3)_2$ subspace code will contain
  precisely $56$ planes disjoint from $S$ and thus be of Type~A.

In fact there is nothing special with the case $q=2$ up to this point,
and for all $q$ we can extend the code of Example~\ref{ex:simul} by
adding $q^2+q+1$
further planes meeting $S$ in a line. This is the subject of the next
lemma, which thereby completes the proof of Theorem~\ref{thm:allq}.

\begin{lemma}
  \label{lma:A}
  The subspace code $\mathcal{C}$ from Example~\ref{ex:simul} can be
  extended to a $(6,q^6+2q^2+2q+1,4;3)_q$ subspace code
  $\overline{\mathcal{C}}$.
\end{lemma}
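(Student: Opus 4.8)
The plan is to exhibit $q^2+q+1$ explicit planes, one through each point of $S$, each meeting $S$ in a line, whose addition to $\mathcal{C}$ preserves the minimum subspace distance. Recall from Example~\ref{ex:simul} that $\mathcal{C}=\mathcal{L}_0\cup\mathcal{N}$ already covers every line of $\PG(5,q)$ disjoint from $S$ exactly once (the $q^6-q^3$ old planes together with the $q^3+q^2+q$ new planes do this by Lemma~\ref{lma:lmrd} and the construction), and moreover every point of $S$ lies on exactly $q$ planes of $\mathcal{N}$, each of which meets $S$ in that single point. A candidate extra plane $E$ with $\dim(E\cap S)=2$ contains $q^2+q$ lines meeting $S$ in a point and $q^2$ lines disjoint from $S$; since the latter are already covered by $\mathcal{C}$, such planes cannot be added. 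Hence the only room left is for planes $E$ with $E\cap S$ a \emph{line} $L\subset S$: these contain $q+1$ lines inside $S$, $q^2+q$ further lines meeting $S$ in a point (namely the $q$ points of $L$, each carrying $q$ such lines), and no line disjoint from $S$. So the task reduces to choosing, for each of the $q^2+q+1$ lines $L$ of $S$, a plane $E_L\supset L$ with $E_L\cap S=L$, such that (a) no line of $\PG(5,q)/\text{point}$ through a point $P\in S$ is covered both by some $E_L$ and by some plane of $\mathcal{N}$ through $P$, and (b) distinct planes $E_L,E_{L'}$ meet in at most a point, equivalently the lines of the form $E_L\cap H$ (for hyperplanes $H\supset S$) are all distinct.

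Concretely, I would work in the coordinate-free model. A plane meeting $S=\{(0,v);v\in\F_{q^3}\}$ in the line $\{(0,v);v\in L\}$, for $L$ a $2$-dimensional $\F_q$-subspace of $\F_{q^3}$, has the form $E_{L,\phi}=\{(x,\phi(x)+v);x\in\F_q w,\,v\in L\}$ where $w\in\F_{q^3}$ is any vector completing $L$ to a basis and $\phi\colon\F_q w\to\F_{q^3}$ is $\F_q$-linear, i.e.\ $\phi$ is determined by the single value $\phi(w)\in\F_{q^3}/L$. The lines of $E_{L,\phi}$ not inside $S$ are the sets $\langle(w,\phi(w)+v)\rangle+\{(0,u);u\in\F_q v'\}$ for suitable $v,v'$; the key point is that the ``slopes'' these contribute, when reduced modulo $S$, must avoid the slopes already used by $\mathcal{N}$. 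Since the new planes $N(a,b,c)$ through a given point $P=\F_q(ab^q-a^qb)$ of $S$ are indexed in a very structured way by $c\in\F_{q^3}/Z$, I would compute exactly which lines through $P$ they cover — this was essentially done inside the proof of Lemma~\ref{lma:simul} via condition~(iii) — and then pick $\phi$ (for each $L$) so that $E_{L,\phi}$ uses the complementary lines through each point of $L$. A natural guess, mirroring the correlation $Z\mapsto Z'$ from Example~\ref{ex:simul}, is to take $E_L$ through the line $L$ and the point $L'=\F_q(ab^q-a^qb)$ of $S$ dual to $L$; the linearity of $u\mapsto ux^q-u^qx$ in $u$ should then make the incidences line up.

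The main obstacle I anticipate is verifying condition~(b) — that the $q^2+q+1$ planes $E_L$ are pairwise at distance $4$ — together with the compatibility~(a) with $\mathcal{N}$, in a single coherent parametrization. Condition~(a) is really a statement about each point $P\in S$ separately: the $q$ planes of $\mathcal{N}$ through $P$ cover $q$ of the $q^2+q+1$ lines through $P$ that lie in $\PG(5,q)/P$ and meet $S/P$ nontrivially... actually one must track lines through $P$ not contained in $S$, of which there are $q^3\cdot(q^2+q+1)/(\dots)$ — the cleanest way is to observe that through $P$ there pass $q^3$ planes of $\mathcal{C}$ meeting $S$ only in $P$ (namely all of $\mathcal{N}$? no, only $q$ of them) — so I would instead count lines: a point $P\in S$ lies on $q^2+q+1$ planes $E_L$ with $P\in L$? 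No: $P$ lies on $q+1$ lines of $S$, hence on $q+1$ of the planes $E_L$. Each such $E_L$ has $q$ lines through $P$ not inside $S$, giving $q(q+1)$ lines through $P$ (outside $S$) used by the $E_L$'s; together with the $q$ lines used by $\mathcal{N}\cap\{\text{planes through }P\}$ and the fact that $\mathcal{L}_0$ avoids $S$ entirely, I need these $q(q+1)+q = q^2+2q$ lines-through-$P$-not-in-$S$ to be distinct and the count to be consistent (the total number of lines through $P$ not contained in $S$ in $\PG(5,q)$ is $q^2\cdot(q^2+q+1)$, so there is ample room, but distinctness is what must be checked). I would carry out this verification by reducing everything modulo $S$ to statements in $\PG(\F_{q^3})\cong\PG(2,q)$ and invoking, as in Example~\ref{ex:simul}, the nonvanishing of the relevant $3\times 3$ Moore-type determinant together with $\gcd(q^2+q+1,q+1)=1$; the self-dual correlation $Z\mapsto Z'$ should do the bookkeeping. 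Once both conditions are established, $\overline{\mathcal{C}}=\mathcal{C}\cup\{E_L : L\in\gaussm{S}{2}\}$ has size $(q^6+q^2+q)+(q^2+q+1)=q^6+2q^2+2q+1$ and minimum distance $4$, which is the claim.
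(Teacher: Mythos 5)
Your reduction of the problem is sound and matches the paper's: since $\mathcal{C}$ already covers every line disjoint from $S$, any added plane must meet $S$ in a line, and it suffices to choose one such plane per line of $S$ so that the chosen planes pairwise meet in a point of $S$ and share no line with $\mathcal{N}$ (conflicts with $\mathcal{L}_0$ are then automatic, as you note). But the proof stops there. The actual construction is offered only as ``a natural guess'', and as stated it is not well formed: a plane containing the line $L\subset S$ and a point $L'\in S$ with $L'\notin L$ would contain $\langle L,L'\rangle=S$, so ``taking $E_L$ through $L$ and the dual point $L'$'' does not define a plane meeting $S$ exactly in $L$. The verification of your conditions (a) and (b) is explicitly deferred (``I would carry out this verification by\dots''), and the supporting counts are shaky (a plane meeting $S$ in a point has $q+1$ lines meeting $S$ and $q^2$ lines disjoint from $S$, not $q^2+q$ and $q^2$). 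So the heart of the lemma --- exhibiting the $q^2+q+1$ planes and checking the two conditions --- is missing.

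The one observation that makes the lemma easy, and which the paper's proof rests on, does not appear in your plan: the points outside $S$ covered by $\mathcal{N}$ are exactly the points covered by the removed planes $\mathcal{L}_1$, namely the points $\F_q(x,x^{q+1}v)$ with $x\in\F_{q^3}^\times$ and $\trace_{\F_{q^3}/\F_q}(v)=0$. With this in hand one fixes a single $v_0$ with $\trace_{\F_{q^3}/\F_q}(v_0)\neq 0$ and lets $E(x)$ be the span of the point $\F_q(x,x^{q+1}v_0)$ and the line $\bigl\{(0,y);\trace_{\F_{q^3}/\F_q}(yx^{-q-1})=0\bigr\}$ of $S$. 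Then condition (a) holds because every point of $E(x)$ outside $S$ has the form $\F_q\bigl(x,x^{q+1}(v_0+v)\bigr)$ with $\trace_{\F_{q^3}/\F_q}(v_0+v)\neq 0$, so $E(x)$ avoids all points covered by $\mathcal{N}$ and a fortiori shares no line with it; and condition (b) holds because distinct $E(x)$, $E(x')$ have disjoint point sets outside $S$ (project onto the first coordinate) and meet $S$ in distinct lines, since $\F_qx\mapsto\F_qx^{q+1}$ permutes the points of $\PG(\F_{q^3})$. Your alternative route --- computing, for each point $P\in S$, which lines through $P$ the $q$ planes of $\mathcal{N}$ through $P$ cover, and matching them with complementary lines via the correlation $Z\mapsto Z'$ --- would require a genuinely new and delicate computation that you have not carried out; without either that computation or the covered-point description above, the argument is incomplete.
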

\begin{proof}
  Our task is to add
  $q^2+q+1$ further planes to $\mathcal{C}$, one plane $E$ with
  $E\cap S=L$ for each line $L$ in $S$. These planes should
  cover points outside $S$ at most once (ensuring that no line through
  a point of $S$ is covered twice), and we must avoid adding
  planes that contain a line already covered by $\mathcal{N}$.

  First we will determine the points outside $S$ covered by the planes
  in $\mathcal{N}$. These are precisely the points covered by the
  ``replaced'' planes in $\mathcal{L}_1$. Viewed as points in
  $\PG(\F_{q^3}\times\F_{q^3})$, they have the form $(x,ux^q-u^qx)$
  with $u\in\F_{q^3}$, $x\in\F_{q^3}^\times$. Since
  $ux^q-u^qx=-x^{q+1}\bigl((ux^{-1})^q-ux^{-1}\bigr)$, the points
  covered by $\mathcal{L}_1$ are the $q^2(q^2+q+1)$ points
  $\F_q(x,x^{q+1}v)$ with $x,v\in\F_{q^3}$, $x\neq 0$ and
  $\trace_{\F_{q^3}/\F_q}(v)=0$, each such point being covered exactly
  $q$ times.\footnote{From this it follows that any plane in
    $\mathcal{L}_1$ intersects $(q-1)(q^2+q+1)=q^3-1$ further planes
    in $\mathcal{L}_1$, i.e., the $q^3$ planes in $\mathcal{L}_1$
    mutually intersect in a point. This can also be concluded from the
    fact that the matrix space $\mathcal{R}$ has constant rank $2$.}
  The $q^5+q^4+q^3-(q^4+q^3+q^2)=q^5-q^2$ points outside $S$ and not
  covered by $\mathcal{L}_1$ are those of the form $\F_q(x,x^{q+1}v)$
  with $\trace_{\F_{q^3}/\F_q}(v)\neq 0$.

  Using this representation we can proceed as follows: We choose
  $v_0\in\F_{q^3}$ with $\trace_{\F_{q^3}/\F_q}(v_0)\neq 0$ and
  connect the point $\F_q(x,x^{q+1}v_0)$ to the line in
  $S=\{0\}\times\F_{q^3}=\bigl\{(0,y);y\in\F_{q^3}\bigr\}$ with
  equation $\trace_{\F_{q^3}/\F_q}(yx^{-q-1})=0$. This gives $q^2+q+1$
  planes $E(x)$, $x\in\F_{q^3}^\times/\F_q^\times$, which intersect
  $S$ in $q^2+q+1$ different lines (since $\F_qx\mapsto\F_qx^{q+1}$
  permutes the points of $\PG(\F_{q^3})$) and cover no point already
  covered by $\mathcal{L}_1$ (since the points in $E(x)$ have the form
  $\F_q\bigl(x,x^{q+1}(v_0+v)\bigr)$ with
  $\trace_{\F_{q^3}/\F_q}(v)=0$ and hence
  $\trace_{\F_{q^3}/\F_q}(v_0+v)\neq 0$).\footnote{It is unfortunate
    that we cannot use lines in $S$ multiple times, since we are
    obviously able to pack the $q^5-q^2$ points outside $S$ and not
    covered by $\mathcal{L}_1$ using $q(q^2+q+1)$ such planes, $q$
    through every line of $S$, by choosing $q$ different values for
    $\trace_{\F_{q^3}/\F_q}(v_0)$.}  Clearly the latter implies that
  $E(x)$ has no line in common with a plane in $\mathcal{N}$. Finally,
  projection onto the first coordinate (in $\F_{q^3}$) shows that
  distinct planes $E(x)$ and $E(x')$ do not have points outside $S$ in
  common and hence intersect in a single point $P\in S$. In all we
  have now shown that the extended subspace code
  $\overline{\mathcal{C}}
  =\mathcal{C}\cup\bigl\{E(x);x\in\F_{q^3}^\times/\F_q^\times\bigr\}$
  has the required parameters.
\end{proof}

\section{Conclusion}\label{sec:concl}
We conclude the paper with a list of open questions
arising from the present work.

\begin{oprobs}
  \begin{enumerate}
  \item Determine the maximum sizes $\smax_2(6,d)$ of binary
    ``mixed-dimension'' subspace codes with packet length (ambient
    space dimension) $6$ and minimum subspace distance~$d$, $1\leq
    d\leq 6$.\footnote{Some of the values $\smax_2(6,d)$ are
      already known. For example, $\smax_2(6,1)=2825$ (the total number of
      subspaces of $\F_2^6$) and $\smax_2(6,5)= \smax_2(6,6)=9$ (as
      is easily verified).}
  \item Give computer-free constructions of $(6,77,4;3)_2$ subspace
    codes of Types B, C, D and E.
  \item Find a computer-free proof of the upper bound
    $\smax_2(6,4;3)\leq 77$. A first step in this direction would be the
    proof that $(6,M,4;3)_2$ subspaces codes with $M\geq 77$ determine
    a distinguished plane $S$ with the property in
    Corollary~\ref{cor:lightplane}. 
  \item For $q>2$, the best known bounds on $\smax_q(6,d)$ are provided by Theorem~\ref{thm:allq} and Lemma~\ref{lma:bound6}:
  \[
      q^6 + 2q^2 + 2q + 1 \leq \smax_q(6,4;3) \leq q^6 + 2q^3 + 1
  \]
  Reduce the remaining gap of size $2(q^3 - q^2 - q)$ by improving the lower or the upper bound.
  \item Generalize Construction~A to packet lengths $v>6$ and/or
    constant dimensions $k>3$.
  \item Prove or disprove $\smax_2(7,4;3)=381$ (now the smallest open binary
    constant-dimension case), thereby resolving the existence question
    for the $2$-analog of the Fano plane.
  \end{enumerate}
\end{oprobs}

\section*{Acknowledgements.} The authors wish to thank Thomas Feulner
for providing us with his canonization algorithm from~\cite{feulner13} 
and the two reviewers for valuable comments and corrections.


\begin{thebibliography}{10}

\bibitem{beutelspacher75}
A.~Beutelspacher.
\newblock Partial spreads in finite projective spaces and partial designs.
\newblock {\em Mathematische Zeitschrift}, 145:211--230, 1975.
\newblock Corrigendum, ibid. 147:303, 1976.

\bibitem{bonoli-melone03}
G. Bonoli, N. Melone.
\newblock A characterization of Grassmann and attenuated spaces as $(0,\alpha)$-geometries.
\newblock {\em European Journal of Combinatorics}, 124:489--498, 2003.

\bibitem{clerck-maldeghem95}
F.~de~Clerck and H.~van Maldeghem.
\newblock Some classes of rank two geometries.
\newblock In F.~Buekenhout, editor, {\em Handbook of Incidence
  Geometry---Buildings and Foundations}, chapter~10, pages 433--475. Elsevier
  Science Publishers, 1995.

\bibitem{delsarte78a}
P.~Delsarte.
\newblock Bilinear forms over a finite field, with applications to coding
  theory.
\newblock {\em Journal of Combinatorial Theory, Series~A}, 25:226--241, 1978.

\bibitem{eisfeld-storme00}
J.~Eisfeld and L.~Storme.
\newblock (Partial) $t$-spreads and minimal $t$-covers in finite projective
  spaces.
\newblock Lecture notes, Ghent University, 2000.

\bibitem{etzion13b}
T.~Etzion.
\newblock Problems on $q$-analogs in coding theory.
\newblock Preprint arXiv:13056126 [cs.IT], May 2013.

\bibitem{etzion-silberstein13}
T.~Etzion and N.~Silberstein.
\newblock Codes and designs related to lifted MRD codes.
\newblock {\em IEEE Transactions on Information Theory}, 59(2):1004--1017, Feb.
  2013.
\newblock Erratum ibid. 59(7):4730, 2013.

\bibitem{etzion-vardy11a}
T.~Etzion and A.~Vardy.
\newblock Error-correcting codes in projective space.
\newblock {\em IEEE Transactions on Information Theory}, 57(2):1165--1173, Feb.
  2011.

\bibitem{feulner09}
T.~Feulner.
\newblock The automorphism groups of linear codes and canonical
representatives of their semilinear isometry classes.
\newblock {\em Advances in Mathematics of Communications},
3(4):363--383, Nov. 2009.

\bibitem{feulner13}
T.~Feulner.
\newblock Canonical Forms and Automorphisms in the Projective Space.
\newblock Preprint arXiv:1305.1193 [cs.IT], May 2013.

\bibitem{feulner-diss}
T.~Feulner.
\newblock Eine kanonische Form zur Darstellung {\"a}quivalenter Codes -- Computergest{\"u}tzte Berechnung und ihre Anwendung in der Codierungstheorie, Kryptographie und Geometrie. PhD thesis, Universit{\"a}t Bayreuth, 2014.

\bibitem{gabidulin85}
E.~M. Gabidulin.
\newblock Theory of codes with maximum rank distance.
\newblock {\em Problems of Information Transmission}, 21(1):1--12, 1985.

\bibitem{gordon-shaw-soicher04}
N.~A. Gordon, R.~Shaw, and L.~H. Soicher.
\newblock Classification of partial spreads in {$\PG(4,2)$}.
\newblock Available online as\\
  \texttt{http://www.maths.qmul.ac.uk/\textasciitilde
  leonard/partialspreads/PG42new.pdf}, 2004.

\bibitem{hilbert97}
D.~Hilbert.
\newblock {Die Theorie der Algebraischen Zahlk\"orper}.
\newblock {\em Jahresbericht der Deutschen Mathematiker-Vereinigung},
  4:175--535, 1897.

\bibitem{knuth71a}
D.~E. Knuth.
\newblock Subspaces, subsets, and partitions.
\newblock {\em Journal of Combinatorial Theory, Series~A}, 10:178--180, 1971.

\bibitem{koetter-kschischang08}
R.~K\"{o}tter and F.~Kschischang.
\newblock Coding for errors and erasures in random network coding.
\newblock {\em IEEE Transactions on Information Theory}, 54(8):3579--3591, Aug.
  2008.

\bibitem{kohnert-kurz08}
A.~Kohnert and S.~Kurz.
\newblock Construction of large constant dimension codes with a prescribed
  minimum distance.
\newblock In J.~Calmet, W.~Geiselmann, and J.~M{\"u}ller-Quade, editors, {\em
  Mathematical Methods in Computer Science. Essays in Memory of Thomas Beth},
  number 5393 in Lecture Notes in Computer Science, pages 31--42.
  Springer-Verlag, 2008.

\bibitem{kschischang12}
F.~R. Kschischang.
\newblock An introduction to network coding.
\newblock In M.~M{\'e}dard and A.~Sprintson, editors, {\em Network Coding:
  {F}undamentals and Applications}, chapter~1, pages 1--37. Elsevier Science
  Publishers, 2012.

\bibitem{pless-huffman98a}
V.~S. Pless and W.~C. Huffman, editors.
\newblock {\em Handbook of Coding Theory}, volume~I.
\newblock Elsevier Science Publishers, 1998.

\bibitem{pless-huffman98b}
V.~S. Pless and W.~C. Huffman, editors.
\newblock {\em Handbook of Coding Theory}, volume~II.
\newblock Elsevier Science Publishers, 1998.

\bibitem{roth91}
R.~M. Roth.
\newblock Maximum-rank array codes and their application to crisscross error
  correction.
\newblock {\em IEEE Transactions on Information Theory}, 37(2):328--336, Mar.
  1991.
\newblock Comments by Ernst M. Gabidulin and Author's Reply, ibid. 38(3):1183,
  1992.

\bibitem{shaw00}
R.~Shaw.
\newblock Subsets of {$\PG(n,2)$} and maximal partial spreads in {$\PG(4,2)$}.
\newblock {\em Designs, Codes and Cryptography}, 21(1/2/3):209--222, Oct. 2000.

\bibitem{silva-kschischang-koetter08}
D.~Silva, F.~Kschischang, and R.~K\"otter.
\newblock A rank-metric approach to error control in random network coding.
\newblock {\em IEEE Transactions on Information Theory}, 54(9):3951--3967,
  Sept. 2008.

\bibitem{trautmann-rosenthal10}
A.-L. Trautmann and J.~Rosenthal.
\newblock New improvements on the {E}chelon-{F}errers construction.
\newblock In A.~Edelmayer, editor, {\em Proceedings of the 19th International
  Symposium on Mathematical Theory of Networks and Systems (MTNS 2010)}, pages
  405--408, Budapest, Hungary, 5--9 July 2010.
\newblock Reprint arXiv:1110.2417 [cs.IT].

\bibitem{wan96a}
Z.-X. Wan.
\newblock {\em Geometry of Matrices}.
\newblock World Scientific, 1996.

\bibitem{xia-fangwei09}
S.-T. Xia and F.-W. Fu.
\newblock Johnson type bounds on constant dimension codes.
\newblock {\em Designs, Codes and Cryptography}, 50(2):163--172, 2009.

\bibitem{yt:JSCC7}
S.~Yang and T.~Honold.
\newblock Good random matrices over finite fields.
\newblock {\em Advances in Mathematics of Communications}, 6(2):203--227, May
  2012.
\end{thebibliography}
\end{document}